\newtheorem{definition}{Definition}[section]
\newtheorem{lemma}{Lemma}[section]
\newtheorem{proposition}{Proposition}[section]
\newtheorem{theorem}{Theorem}[section]
\newtheorem{corollary}{Corollary}[section]
\theoremstyle{remark}
\newtheorem{remark}{\bf{Remark}}[section]
\newtheorem{example}{\bf{Example}}[section]
\newcommand{\ConvD}{\overset{d}{\rightarrow}}
\newcommand{\ConvP}{\overset{p}{\rightarrow}}
\newcommand{\Cov}{\mathrm{Cov}}
\newcommand{\Corr}{\mathrm{Corr}}
\newcommand{\Var}{\mathrm{Var}}
\newcommand{\E}{\mathbb{E}}
\def\blfootnote{\xdef\@thefnmark{}\@footnotetext}
\begin{document}
\begin{frontmatter}
\title{On the validity of resampling methods under long memory}
\runtitle{Validity of resampling under long memory}

%\begin{keyword}[class=MSC]
%\kwd[Primary]{60G18}
%\kwd[secondary]{60F05}
%\end{keyword}

%\begin{keyword}
%\kwd{}
%\kwd{}
%\end{keyword}

\begin{aug}
\author{\fnms{Shuyang} \snm{Bai}\thanksref{m1}\ead[label=e1]{bsy9142@uga.edu}} and
\author{\fnms{Murad} S. \snm{Taqqu}\corref{tq}\thanksref{t1,m2}\ead[label=e2]{murad@bu.edu}}

\thankstext{t1}{Corresponding author}
\runauthor{Bai, S.  and  Taqqu, M.S.}

\affiliation{University of Georgia\thanksmark{m1}, Boston University\thanksmark{m2}}

\address{Shuyang Bai\\
Department of Statistics\\
University of Georgia\\
101 Cedar Street\\
Athens, Georgia, 30606, US\\
\printead{e1}
}
\address{Murad S. Taqqu\\
Department of Mathematics and Statistics\\
Boston University\\
111 Cumminton Mall\\
Boston, MA, 02215, US\\
\printead{e2}
}
\end{aug}

\begin{abstract}
For long-memory time series, inference based on resampling is of crucial importance, since the asymptotic distribution can often be non-Gaussian and is difficult to determine statistically. However due to the strong dependence, establishing the asymptotic validity of resampling methods is nontrivial.  In this paper, we derive an efficient bound for the canonical correlation between two finite blocks of a  long-memory time series.  We show how this bound can be applied to establish the asymptotic consistency of subsampling procedures for general statistics under long memory. It allows the subsample size $b$ to be  $o(n)$,  where $n$ is the  sample size,  irrespective of the strength of the memory.  We are then able to improve many results found in the literature. We also consider applications of  subsampling procedures under long memory to the sample covariance, M-estimation and empirical processes.

\end{abstract}

\begin{keyword}[class=MSC]
\kwd{62M10}
\kwd{62G09}
\end{keyword}

\begin{keyword}
\kwd{Long memory}
\kwd{Long-range dependence}
\kwd{Resampling}
\kwd{Subsampling}
\kwd{Sampling window}
\kwd{Block sampling}
\kwd{Non-central limit theorems}
\kwd{Canonical Correlation}
\end{keyword}

\end{frontmatter}

\section{Introduction}

A stationary time series $\{X_n\}$ is said to have ``\emph{long memory}'', also called ``\emph{long-range dependence}'', if the covariance $\Cov[X_n,X_0]$ decays slowly like $n^{2d-1}$ as $n\rightarrow\infty$, where $2d-1\in (-1,0)$. The parameter $$
d\in (0,1/2)
 $$
 is called the \emph{memory parameter}. Time series exhibiting long memory has been found frequently in practice.  Statistical  problems under such a context have been widely studied.  We refer the reader  to the recent monographs \citet{doukhan:oppenheim:taqqu:2003:theory}, \citet{giraitis:koul:surgailis:2009:large} and \citet{beran:2013:long} and \citet{pipiras:taqqu:2017:long} for more information.

Long memory creates a challenge for  large-sample inference. This is because the distributional scaling limits of some common statistical functionals, e.g. sample sum and quadratic forms, may be  non-Gaussian distributions due to the so-called \emph{non-central limit theorems}  (see, e.g., \citet{dobrushin:1979:gaussian}, \citet{taqqu:1979:convergence} and  \citet{terrin:taqqu:1990:noncentral}).  This typically leads to the following situation: even though all possible asymptotic distributions of a statistic  may have been derived,   it is often difficult to determine in practice which  the relevant one is based on the observations.  In such a situation, it is natural to resort to resampling.

A common strategy for resampling dependent data  is the so-called ``{\bf moving block bootstrap}''. Suppose that a stationary time series $\{X_1,\ldots,X_n\}$ is observed. The moving block bootstrap  performs the following procedure:

 1) with $n-b+1$ consecutive blocks of size $b$:
 $$
 \{X_1,\ldots,X_b\},\ \{X_2,\ldots,X_{b+1}\},\ \ldots \ , \ \{X_{n-b+1},\ldots,X_n\},
  $$
  sample randomly about $n/b$ blocks and paste them together to get a bootstrapped copy of $\{X_1,\ldots,X_n\}$;

 2) Compute the  statistics of interest on this bootstrapped copy;

  3)  Repeat the preceding two steps many times to get an empirical distribution of  the statistics for inference.

   Note that the moving block bootstrap involves rearranging the order of the time series and this can destroy the dependence structure.

  An alternative resampling scheme, called ``{\bf subsampling}'', directly computes the statistics on the block subsamples:  $\{X_1,\ldots,X_b\}$, $\{X_2,\ldots,X_{b+1}\},\ldots, \{X_{n-b+1},\ldots,X_n\}$,  and uses the resulting empirical distribution for inference. It usually involves a proper rescaling since the sample size has been reduced from $n$ to $b$.  Note that in subsampling, the original  time order is intact.
Subsampling appears to be a more robust procedure when there is strong dependence. Indeed,
\citet{lahiri:1993:moving} noticed that the moving block bootstrap procedure may fail in the long-memory case, while the subsampling method is shown to work  at least in the special case of sample mean (see  \citet{hall:1998:sampling}, \cite{nordman:2005:validity} and \cite{zhang:2013:block}).
For general information on resampling dependent data, see the monographs \citet{politis:1999:subsampling}, \citet{lahiri:2003:resampling} and Chapter 10 of \citet{beran:2013:long}.

The asymptotic validity of a subsampling procedure is usually formulated under the following setup: as the sample size $n\rightarrow\infty$, the block (subsample) size $b=b_n\rightarrow\infty$, while $b_n$ grows more slowly than the sample size:
\begin{equation*}
b_n=o(n).
\end{equation*}
 \citet{politis:romano:1994:large} established the validity of subsampling for general stationary time series by imposing an implicit  condition, which we call the ``subsampling condition'':
\begin{equation}\label{eq:block condition alpha}
\sum_{k=1}^n \alpha_{k,b_n}=o(n),  \quad\text{as }n\rightarrow\infty,
\end{equation}
where $\alpha_{k,b}$ is the between-block mixing coefficient defined by
\begin{equation}\label{eq:alpha_{k,b}}
\alpha_{k,b}=\sup\{|P(A\cap B)-P(A)P(B)|, ~A\in \mathcal{F}_1^b, ~B\in \mathcal{F}_{k+1}^{k+b}\},
\end{equation}
 with $\mathcal{F}_{p}^q$ denoting the sigma field generated by $\{X_{p},\ldots,X_{q}\}$. The  condition (\ref{eq:block condition alpha}) will be discussed in more details in Section \ref{sec:app subsampling} below. It is, however, not clear whether the natural block size condition $b_n=o(n)$ implies the subsampling condition (\ref{eq:block condition alpha}) in general. Under a typical weak dependence condition, strong mixing (see \citet{bradley:2007:mixing}), where it is assumed that as $k\rightarrow\infty$,
\begin{equation}\label{eq:strong mixing}
\alpha_{k}:=\sup\{|P(A\cap B)-P(A)P(B)|, ~A\in \mathcal{F}_{-\infty}^0,~ B\in \mathcal{F}_k^{\infty}\}\rightarrow 0,
\end{equation}
one can deduce easily that (\ref{eq:block condition alpha}) holds if  $b_n=o(n)$. See  \citet{politis:1999:subsampling}, Theorem 4.2.1. Allowing $b_n=o(n)$ has  important statistical implications. See Remark \ref{Rem:b=o(n) important}  below.

In the case where $\{X_n\}$ is given by a long-memory model,  the implication $b_n=o(n)$ $\Rightarrow$ (\ref{eq:block condition alpha})   has not been established as far as we know. In this paper, for a common class of long-memory models (see (\ref{eq:subordination}) below), we shall derive  an efficient bound on the between-block mixing coefficient $\alpha_{k,b}$ in (\ref{eq:alpha_{k,b}}). Such a bound entails that $b_n=o(n)$ implies (\ref{eq:block condition alpha}).

The paper is organized as follows.  In Section \ref{sec:gaussian}, we introduce the long-memory Gaussian subordination model  and discuss its important properties related to $\alpha_{k,b}$  in (\ref{eq:alpha_{k,b}}).
In Section \ref{sec:main results}, we state the main results. In Section \ref{sec:app subsampling}, the implication of the results on subsampling of long-memory Gaussian subordination model is discussed. Many results in the literature are improved. We consider the subsampling of some common statistics in Section \ref{sec:other common stat}.  Proofs of the main results in Section \ref{sec:main results} are given in Section \ref{sec:proofs}.

\section{Gaussian subordination and canonical correlation}\label{sec:gaussian}
In this paper, we focus on a typical class of long memory models, which is obtained by the so-called \emph{Gaussian subordination}. Let $\{Z_n\}$ be  stationary Gaussian process with   covariance function $\gamma(n)=\Cov[Z_n,Z_0]$. Suppose $\E Z_n=0$ and $\Var [Z_n]=1$.
 Write
\begin{equation}\label{e:VZ}
\mathbf{Z}_p^q= \left(Z_p,\ldots, Z_q \right).
\end{equation}
Denote the covariance matrix of $\mathbf{Z}_1^b$ by
\begin{equation}\label{eq:Sigma_m}
\Sigma_b=\Big(\gamma(i_2-i_1)= \Cov[Z_{i_1},Z_{i_2}]\Big)_{1\le i_1,i_2\le b} .
\end{equation}
We assume throughout that $\Sigma_b$ is non-singular for every $b\in \mathbb{Z}_+$.
Consider two vectors
$$\mathbf{Z}_{1}^b \quad \mbox{\rm and} \quad \mathbf{Z}_{k+1}^{k+b}
 $$
 distant by $k\in \mathbb{Z}_+$ of the same dimension $b$.
 Denote
the cross-block covariance matrix between $\mathbf{Z}_{1}^b$ and $\mathbf{Z}_{k+1}^{k+b}$ by
\begin{equation}\label{eq:Sigma_k,b}
\Sigma_{k,b}=\Big(\gamma(i_2+k-i_1)=   \Cov[Z_{i_1},Z_{i_2+k}]\Big)_{1\le i_1,i_2\le b}.
\end{equation}

The \emph{canonical correlation} $\rho(\cdot,\cdot)$ between two blocks of $\{Z_n\}$ of size $b$ differing by a translation of $k$ units in time is defined as
\begin{equation}\label{eq:rho_k,b}
\rho_{k,b}= \rho\left(\mathbf{Z}_1^b , \mathbf{Z}_{k+1}^{k+b}\right):=
\sup_{\mathbf{u}\in \mathbb{R}^b, \mathbf{v}\in \mathbb{R}^b} \Corr\Big(\langle\mathbf{u},\mathbf{Z}_1^b\rangle, \langle\mathbf{v}, \mathbf{Z}_{k+1}^{k+b}\rangle\Big)=\sup_{\mathbf{u}, \mathbf{v}\in \mathbb{R}^{b}}\frac{\mathbf{u}^T \Sigma_{k,b} \mathbf{v}}{\sqrt{\mathbf{u}^T \Sigma_b \mathbf{u}} \sqrt{\mathbf{v}^T \Sigma_b \mathbf{v}}},
\end{equation}
where $\langle \cdot, \cdot\rangle$ denote the Euclidean inner product.
Note that
\begin{equation}\label{eq:rho_k,b<=1}
0\le \rho_{k,b} \le 1.
\end{equation}

Consider now the observed stationary time series  $\{X_n\}$. The so-called \emph{Gaussian subordination} model is given by
\begin{equation}\label{eq:subordination}
X_n=G(Z_n),
\end{equation}
where $G(\cdot)$ is a measurable function.   We then say that $\{X_n\}$ is \emph{subordinated to} the underlying Gaussian $\{Z_n\}$. In fact, our results incorporate also the more general case
\[X_n=G(Z_n,\ldots,Z_{n-l}),
\]
 where $l$ is a fixed  non-negative integer. See, e.g., \citet{bai:taqqu:zhang:2015:unified}. We focus for simplicity only on the instantaneous case $l=0$, as is mostly considered in the literature. We note the following important quantity: if $\E G(Z)^2<\infty$ for a standard Gaussian $Z$, then the \emph{Hermite rank} $m$ of $G(\cdot)$ is defined as
\begin{equation}\label{eq:Hermite Rank Def}
m=\inf\{p\in \{1,2,3,\ldots\}: ~\E\big[ (G(Z)-\E G(Z)) Z^p\big]\neq 0\}.
\end{equation}
An equivalent definition with the monomial $Z^p$ replaced by $p$-th order Hermite polynomial is often used as well (see Definition 3.2 in \citet{beran:2013:long}).

Suppose now that $\{Z_n\}$ has long memory with memory parameter $d\in (0,1/2)$, that is in our context,
$$
\Cov[Z_k,Z_0]\sim c k^{2d-1}
 $$
 as $k\rightarrow\infty$, where $c$ is a positive constant, so that
 $$\sum_{k=-\infty}^\infty|\Cov[Z_k,Z_0]|=\infty.
 $$
     Then the covariance $\Cov[X_k,X_0]$  behaves like $k^{(2d-1)m}$ as $k\rightarrow\infty$. Hence  if $(2d-1)m>-1$, then $\{X_n\}$ has long memory;  if $(2d-1)m<-1$, then $\{X_n\}$ does not have long memory. See \citet{dobrushin:major:1979:non} and \citet{taqqu:1979:convergence} for more details.

The model (\ref{eq:subordination}) has some important mathematical advantages. First, it allows  various limit theorems involving short or long memory as well as light tails, namely finite variance (\citet{dobrushin:1979:gaussian}, \citet{taqqu:1979:convergence}, \citet{breuer:major:1983:central}),  but also heavy tails (\citet{sly:heyde:2008:nonstandard}). The second advantage, which greatly simplifies the analysis of the between-block mixing coefficient $\alpha_{k,b}$ in (\ref{eq:alpha_{k,b}}), is the following fact (Theorem 1 of \citet{kolmogorov:razanov:1960:strong}):

 For any jointly Gaussian vectors $(\mathbf{Z}_1,\mathbf{Z}_2)$, one has
\begin{equation}\label{eq:Kol-Roz}
\sup_{F,G}\left|\Corr\big(F_1(\mathbf{Z}_1),F_2(\mathbf{Z}_2)\big)\right|=  \rho\left(\mathbf{Z}_1 , \mathbf{Z}_{2} \right),
\end{equation}
where $\rho(\cdot,\cdot)$ is the canonical correlation as in (\ref{eq:rho_k,b}), and the supremum is taken over all functions $F_1,F_2$ such that $\E F_1(\mathbf{Z}_1)^2<\infty$, $\E F_2(\mathbf{Z}_2)^2<\infty$.

 Note that (\ref{eq:Kol-Roz}) reduces nonlinear functions $F_1$ and $F_2$ to linear functions.
Let $\alpha_{k,b}$ be the between-block mixing coefficient of $\{X_n\}$ defined in (\ref{eq:alpha_{k,b}}).
In view of (\ref{eq:Kol-Roz}), one has (see  Theorem 2 of \citet{kolmogorov:razanov:1960:strong}):
\begin{equation}\label{eq:alpha<=rho}
\alpha_{k,b}\le \rho_{k,b}.
\end{equation}
Hence  the subsampling condition (\ref{eq:block condition alpha})  holds if
\begin{equation}\label{eq:block cond gen}
\sum_{k=1}^{n} \rho_{k,b_n}=o(n),    \quad \text{as }n\rightarrow\infty.
\end{equation}
In the context of Gaussian subordination,  we shall also call (\ref{eq:block cond gen})  the \emph{subsampling condition}.

There has been some recent progress on deriving  (\ref{eq:block cond gen}) from some growth conditions more stringent than $b_n=o(n)$.
\citet{bai:taqqu:zhang:2015:unified} proved the following  bound on the canonical correlation $\rho_{k,b}$ (see  Lemma 3.4  of \citet{bai:taqqu:zhang:2015:unified}).
\begin{proposition}\label{Pro:crude}
Let
\begin{equation}\label{e:Mmax}
M_\gamma(k)=\max_{n> k} |\gamma(n)|,
\end{equation}
and let
\begin{equation}\label{eq:lambda min}
\text{$\lambda_b=$ the {\it minimum} eigenvalue of  $\Sigma_b$}.
\end{equation}
Then
\begin{equation}\label{eq:crude bound rho_k,b}
\rho_{k,b}\le  b \frac{M_\gamma(k-b)}{\lambda_b}.
\end{equation}
\end{proposition}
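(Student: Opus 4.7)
The plan is to bound the supremum in the Rayleigh-type quotient
\[
\rho_{k,b}=\sup_{\mathbf{u},\mathbf{v}\in\mathbb{R}^b}\frac{\mathbf{u}^T\Sigma_{k,b}\mathbf{v}}{\sqrt{\mathbf{u}^T\Sigma_b\mathbf{u}}\sqrt{\mathbf{v}^T\Sigma_b\mathbf{v}}}
\]
by bounding the denominator from below and the numerator from above separately. For the denominator, since $\Sigma_b$ is symmetric positive-definite with smallest eigenvalue $\lambda_b>0$, one has $\mathbf{u}^T\Sigma_b\mathbf{u}\ge \lambda_b\|\mathbf{u}\|^2$ and similarly for $\mathbf{v}$. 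Hence
\[
\rho_{k,b}\le \frac{1}{\lambda_b}\sup_{\mathbf{u},\mathbf{v}\in\mathbb{R}^b}\frac{\mathbf{u}^T\Sigma_{k,b}\mathbf{v}}{\|\mathbf{u}\|\,\|\mathbf{v}\|}=\frac{\|\Sigma_{k,b}\|_{op}}{\lambda_b},
\]
where $\|\cdot\|_{op}$ denotes the operator norm.

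For the numerator, I would dominate the operator norm by the Frobenius norm, $\|\Sigma_{k,b}\|_{op}\le \|\Sigma_{k,b}\|_F$, and then bound each entry of $\Sigma_{k,b}$. By (\ref{eq:Sigma_k,b}), the $(i_1,i_2)$ entry is $\gamma(i_2+k-i_1)$ with $1\le i_1,i_2\le b$, so the lag $i_2+k-i_1$ always satisfies $i_2+k-i_1\ge 1+k-b>k-b$. Consequently
\[
|\gamma(i_2+k-i_1)|\le M_\gamma(k-b)
\]
by the definition of $M_\gamma$ in (\ref{e:Mmax}) (when $k-b$ is non-positive one uses the symmetry $\gamma(n)=\gamma(-n)$ together with $|\gamma(0)|=1$, so the inequality is trivial). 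Summing the $b^2$ squared entries then yields $\|\Sigma_{k,b}\|_F\le b\,M_\gamma(k-b)$, which combined with the previous display gives the claimed inequality (\ref{eq:crude bound rho_k,b}).

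There is no real obstacle here; the argument is essentially a two-line linear-algebra estimate. The only point one must be careful about is the lag range for the entries of the cross-covariance block $\Sigma_{k,b}$, and in particular handling the case $k<b$ via the evenness of $\gamma$ so that the bound $M_\gamma(k-b)$ remains valid. Everything else—the Rayleigh quotient lower bound by $\lambda_b$ and the Frobenius domination of the operator norm—is completely standard.
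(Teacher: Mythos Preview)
Your proof is correct and follows exactly the approach the paper indicates: the paper states that ``the bound is obtained by optimizing the denominator and numerator in (\ref{eq:rho_k,b}) separately'' (and cites Lemma 3.4 of \citet{bai:taqqu:zhang:2015:unified} for details), which is precisely what you do---bounding the denominator via the minimum eigenvalue $\lambda_b$ and the numerator via the entrywise (Frobenius) bound on $\Sigma_{k,b}$.
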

The bound (\ref{eq:crude bound rho_k,b}) is obtained  by optimizing the denominator and numerator in (\ref{eq:rho_k,b}) separately. Note that the minimum eigenvalue $\lambda_b$ is positive since $\Sigma_b$ is assumed to be positive definite.

How useful is the bound (\ref{eq:crude bound rho_k,b})? As shown in \citet{bai:taqqu:zhang:2015:unified}, this bound is satisfactory in cases where $\{Z_n\}$ has weak dependence. For example, suppose that $\gamma(n)$, $n\ge 1$, is bounded by  $cn^{-\beta}$ with $\beta>1$ and $c>0$,  and that the spectral density is bounded away from zero.  Then
$$
\sum_{k=1}^\infty M_\gamma(k)<\infty,\quad \mbox{\rm and} \quad \lambda_{\min}:=\inf_m\lambda_m>0
$$ (\citet{brockwell:1991:time}, Proposition 4.5.3) . Consequently if $b_n=o(n)$, we have
$$
\sum_{k=1}^n \rho_{k,b_n}\le b_n \lambda_{\min}^{-1}\sum_{k=1}^\infty  M_\gamma(k) =o(n).
 $$
 Thus $b_n=o(n)$ implies (\ref{eq:block cond gen}).

  However,
in the case where $\{Z_n\}$ has long memory with a memory parameter  $d\in (0,1/2)$, the crude bound (\ref{eq:crude bound rho_k,b}) is not satisfactory as it requires a block size
\begin{equation}\label{eq:b_n naive restr}
b_n=o(n^{1-2d})
\end{equation}
to obtain the subsampling condition (\ref{eq:block cond gen}). See the relation (57) in \citet{bai:taqqu:zhang:2015:unified}. The block size condition (\ref{eq:b_n naive restr}) depends on $d$ and becomes quite restrictive when $d$ is close to $1/2$.

Based on the result of \citet{adenstedt:1974:large},   \citet{betken:Wendler:2015:subsampling} have  obtained recently  a bound which improves  (\ref{eq:crude bound rho_k,b}) in the long memory situation.  Under the assumptions given in the Appendix,   \citet{betken:Wendler:2015:subsampling} derived the following bound for $\rho_{k,b}$:

\begin{proposition}\label{Pro:bw}
Let $\rho_{k,b}$ be as in (\ref{eq:rho_k,b}). Suppose that the assumptions BW1 and BW2 in the Appendix hold, where the covariance is
$$
\gamma(n)=n^{2d-1}L_\gamma(n)
$$
 for some slowly varying $L_\gamma$, $d\in (0,1/2)$. Then when $k>b$, we have  for some constants $C_1,C_2>0$ that
\begin{equation}\label{eq:BW ineq}
\rho_{k,b}\le
 C_1 \left(\frac{b}{k-b}\right)^{1-2d}
 L_\gamma(k-b)
 +C_2 b^2 (k-b)^{2d-2}\max\{L_\gamma(k-b), 1\}.
\end{equation}
\end{proposition}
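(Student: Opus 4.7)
The plan is to reformulate $\rho_{k,b}$ as an operator norm and then decompose the cross-block covariance matrix into a dominant rank-one piece plus a remainder that captures the variation of $\gamma$ across a block of size $b$. The change of variables $\mathbf{u}=\Sigma_b^{-1/2}\tilde{\mathbf{u}}$, $\mathbf{v}=\Sigma_b^{-1/2}\tilde{\mathbf{v}}$ in (\ref{eq:rho_k,b}) yields the identity
$$
\rho_{k,b}=\bigl\|\Sigma_b^{-1/2}\Sigma_{k,b}\Sigma_b^{-1/2}\bigr\|_{\mathrm{op}}.
$$
For any splitting $\Sigma_{k,b}=A+B$ the triangle inequality then gives $\rho_{k,b}\le\|\Sigma_b^{-1/2}A\Sigma_b^{-1/2}\|_{\mathrm{op}}+\|\Sigma_b^{-1/2}B\Sigma_b^{-1/2}\|_{\mathrm{op}}$. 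I would take $A=\gamma(k-b)\,\mathbf{1}\mathbf{1}^T$, with $\mathbf{1}=(1,\ldots,1)^T\in\mathbb{R}^b$, and $R_{k,b}:=\Sigma_{k,b}-A$, so that $(R_{k,b})_{i_1,i_2}=\gamma(k+i_2-i_1)-\gamma(k-b)$ records the oscillation of $\gamma$ on the window $[k-b,k+b-1]$.

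For the rank-one part, the elementary identity $\|\Sigma_b^{-1/2}\mathbf{1}\mathbf{1}^T\Sigma_b^{-1/2}\|_{\mathrm{op}}=\mathbf{1}^T\Sigma_b^{-1}\mathbf{1}$ reduces matters to the asymptotics of the inverse quadratic form on $\mathbf{1}$. This is exactly the reciprocal of the variance of the best linear unbiased estimator of the mean studied by \citet{adenstedt:1974:large}, and under Assumption BW1 his result yields $\mathbf{1}^T\Sigma_b^{-1}\mathbf{1}\asymp b^{1-2d}/L_\gamma(b)$. Combining with the regular variation $|\gamma(k-b)|\asymp (k-b)^{2d-1}L_\gamma(k-b)$ and absorbing the factor $1/L_\gamma(b)$ into the constant via slow variation gives the first term $C_1\bigl(b/(k-b)\bigr)^{1-2d}L_\gamma(k-b)$ in (\ref{eq:BW ineq}).

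For the remainder I would estimate each entry of $R_{k,b}$ via a quantitative Potter-bound or mean-value argument on the regularly varying $\gamma$, obtaining, for $i_1,i_2\in\{1,\ldots,b\}$,
$$
|\gamma(k+i_2-i_1)-\gamma(k-b)|\le C\,b\,(k-b)^{2d-2}\max\{L_\gamma(k-b),1\},
$$
where the $\max$ with $1$ is the slack that Potter bounds enforce when $L_\gamma$ need not be bounded below. Since $R_{k,b}$ is $b\times b$, its operator norm is controlled by $b$ times this entrywise bound, so $\|R_{k,b}\|_{\mathrm{op}}=O(b^2(k-b)^{2d-2}\max\{L_\gamma(k-b),1\})$. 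Dividing by the minimum eigenvalue $\lambda_b$ of $\Sigma_b$, which Assumption BW2 bounds below by a positive constant (the associated spectral density being bounded away from zero outside the origin), yields the second term of (\ref{eq:BW ineq}).

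The main obstacle is the entrywise estimate on $R_{k,b}$: regular variation of $\gamma$ alone does not control increments, so one must either invoke a smoothness/monotonicity hypothesis on $\gamma$ (which BW1/BW2 are designed to supply) or carry out a uniform Potter-bound computation over the window $[k-b,k+b]$. The appearance of $\max\{L_\gamma(k-b),1\}$ in the second term of (\ref{eq:BW ineq}) is exactly the price paid for such a uniform slowly varying estimate; once this is in hand, the rest of the argument is a clean assembly from the operator-norm triangle inequality and the Adenstedt asymptotics.
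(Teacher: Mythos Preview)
The paper does not supply its own proof of this proposition; it is quoted from \citet{betken:Wendler:2015:subsampling}. Your plan is essentially their argument, as one can infer from the paper's Remark after Lemma~\ref{Lem:bound sum a} and the Appendix footnote: recast $\rho_{k,b}$ as an operator norm (equivalently, the constrained optimization (\ref{eq:const opt time})), split $\Sigma_{k,b}=\gamma(k-b)\mathbf{1}\mathbf{1}^T+R_{k,b}$, bound the rank-one piece via Adenstedt's BLUE result, and control the remainder by an entrywise increment estimate on $\gamma$ divided by the minimum eigenvalue $\lambda_b$.

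One point to sharpen. Your claim that the factor $1/L_\gamma(b)$ coming from $\mathbf{1}^T\Sigma_b^{-1}\mathbf{1}\asymp b^{1-2d}/L_\gamma(b)$ can be ``absorbed into the constant via slow variation'' is not correct as stated: slow variation gives no boundedness. What actually makes this work is that, under the spectral lower bound in BW2, Lemma~2 of \citet{betken:Wendler:2015:subsampling} (which invokes Lemma~4.4 of \citet{adenstedt:1974:large}) yields directly
\[
\sup_{\mathbf{u}^T\Sigma_b\mathbf{u}\le 1}\Big|\sum_{j=1}^b u_j\Big|\le c\,b^{1/2-d},
\qquad\text{equivalently}\qquad
\mathbf{1}^T\Sigma_b^{-1}\mathbf{1}\le c\,b^{1-2d},
\]
with no slowly varying denominator to dispose of. With this correction, the rank-one contribution is exactly $|\gamma(k-b)|\cdot\mathbf{1}^T\Sigma_b^{-1}\mathbf{1}\le C_1\bigl(b/(k-b)\bigr)^{1-2d}L_\gamma(k-b)$. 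Your treatment of the remainder is fine: BW1 gives the entrywise bound $|\gamma(k+i_2-i_1)-\gamma(k-b)|\le Cb(k-b)^{2d-2}\max\{L_\gamma(k-b),1\}$ (split $\gamma(j)-\gamma(n)$ into $j^{2d-1}[L_\gamma(j)-L_\gamma(n)]+[j^{2d-1}-n^{2d-1}]L_\gamma(n)$ and apply BW1 to the first piece and the mean value theorem to the second), and BW2 gives $\lambda_b\ge 2\pi\inf_\lambda f(\lambda)>0$, so the second term of (\ref{eq:BW ineq}) follows.
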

In order to ensure   (\ref{eq:block cond gen}) using (\ref{eq:BW ineq}),  it is imposed in \citet{betken:Wendler:2015:subsampling} that
\begin{equation}\label{eq:BW block}
b_n=o(n^{1-d-\epsilon})
\end{equation}
for some arbitrarily small $\epsilon>0$. Note that the requirement (\ref{eq:BW block}), although it depends on $d$, is less restrictive than (\ref{eq:b_n naive restr}), and the order $b_n=O(n^{1/2})$ is always allowed since $d<1/2$.
The right-hand side of (\ref{eq:BW ineq}) involves two terms. If it were not for the presence of the second term,  one would be able to impose only $b_n=o(n)$.
\begin{remark}\label{Rem:b=o(n) important}
Allowing the non-restrictive condition $b_n=o(n)$ has an important practical implication: the valid range of block size choice does not depend on the memory parameter $d$ of $\{Z_n\}$, which is typically unknown. This is particularly desirable for resampling procedures designed to avoid treating the nuisance parameter $d$ under long memory (see, e.g., \citet{jach:2012:subsampling} and \citet{bai:taqqu:zhang:2015:unified}).
\end{remark}

In Section \ref{sec:main results} below,  we shall  bound  $\rho_{k,b}$ in such a way that   only the first term in (\ref{eq:BW ineq}) is effectively present for a wide class of long memory models. This  will allow  the nonrestrictive condition  $b_n=o(n)$ to imply the subsampling condition (\ref{eq:block cond gen}) (see Theorem \ref{Thm:block cond} below). We obtain our result by  establishing a tight bound in the special case of a $\mathrm{FARIMA}(0,d,0)$ time series, where there is   explicit information  on the model, and then extending the result to a more general setup using  Fourier analysis.

We  mention that our results  can be easily generalized to obtain a bound on the canonical correlation $\rho\left(\mathbf{Z}_1^{a},\mathbf{Z}_{k+1}^{k+b}\right)$, where possibly different block sizes $a$ and $b$ are involved. Since the main application,  subsampling,   involves only  $a=b$,  the proofs  are given only for this case. Nevertheless,  the statements concerning the case $a\neq b$ are given in Remark (\ref{Rem:diff m}) below.

\section{Main results}\label{sec:main results}
In this section we state the main result.
Let $\{Z_n\}$ be a second-order stationary process with covariance function $\gamma(n)$ and spectral density $f(\lambda)$ so that
\[
\gamma(n)=\int_{-\pi}^{\pi} f(\lambda) e^{in\lambda}d\lambda.
\]
Let $\rho_{k,b}$ be as in (\ref{eq:rho_k,b}).
Consider the spectral density of a $\mathrm{FARIMA}(0,d,0)$ time series  (see, e.g., Section 13.2 of \citet{brockwell:1991:time}):
\begin{equation}\label{eq:f_d}
f_d(\lambda):=\frac{1}{2\pi} \left|1-e^{i\lambda}
\right|^{-2d}=\frac{1}{2\pi} \left[\sin(\lambda/2)^2\right]^{-d},\quad 0<d<1/2.
\end{equation}
First we state a result in the special case of $\mathrm{FARIMA}(0,d,0)$ time series.
\begin{theorem}\label{Thm:special}
Suppose that $f(\lambda)=\sigma^2f_d(\lambda)$, for some constant $\sigma^2>0$,   is the spectral spectral density of $\{Z_n\}$. Then when $ 1\le  b<k$, there exists a constant  $c>0$, such that
\begin{equation}\label{eq:bound farima}
\rho_{k,b}\le c \left(\frac{b}{k-b}\right)^{1-2d}.
\end{equation}
\end{theorem}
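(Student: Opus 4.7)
The strategy is to recast the canonical correlation via the spectral representation and exploit the explicit structure of the $\mathrm{FARIMA}(0,d,0)$ process. Since the Toeplitz matrices $\Sigma_b$ and $\Sigma_{k,b}$ both have symbol $f_d$, one has
\[
\rho_{k,b} = \sup_{U,V \in P_b} \frac{\left|\int_{-\pi}^{\pi} \overline{U(\lambda)}V(\lambda) e^{ik\lambda} f_d(\lambda)\, d\lambda\right|}{\left(\int |U|^2 f_d\, d\lambda\right)^{1/2}\left(\int |V|^2 f_d\, d\lambda\right)^{1/2}},
\]
where $P_b = \{\sum_{j=1}^{b} u_j e^{ij\lambda}\}$. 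Factoring $f_d = |h|^2$ with $h(\lambda) = (2\pi)^{-1/2}(1-e^{i\lambda})^{-d}$ and setting $\phi = Uh$, $\psi = Vh \in H^2$, the numerator equals $2\pi|\sum_{m\ge 1} \bar\phi_{m+k} \psi_m|$, with $\phi_n = \sum_{j=1}^{\min(n,b)} u_j h_{n-j}$ and with the Taylor coefficients $h_n \sim c_0 n^{d-1}$ of $(2\pi)^{-1/2}(1-z)^{-d}$.

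The main idea is a Taylor expansion of these Fourier coefficients. For $n > b$, the ratio identity $h_{n-j}/h_n = 1 + O(j/n)$ yields $\phi_n = U(1)\, h_n + R_n$ where $U(1) := \sum_{j=1}^b u_j$ and $R_n$ is a lower-order remainder. Since $m+k > b$ whenever $m \ge 1$ and $k > b$, this expansion applies to $\phi_{m+k}$ throughout the sum; combined with a parallel expansion for $\psi_m$ on the subrange $m > b$ and the elementary estimate $\sum_{m\ge 1} h_{m+k}h_m \sim c_1 k^{2d-1}$, the leading order of the numerator is $c_1 \bar U(1) V(1)\, k^{2d-1}$. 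The decisive bound on $|U(1)|$ comes from Cauchy-Schwarz in the $\Sigma_b$-inner product,
\[
|U(1)|^2 = |\mathbf 1^T u|^2 \le (\mathbf 1^T \Sigma_b^{-1}\mathbf 1)\,(u^T \Sigma_b u) = (\mathbf 1^T \Sigma_b^{-1}\mathbf 1)\int |U|^2 f_d\, d\lambda,
\]
coupled with the classical asymptotic $\mathbf 1^T \Sigma_b^{-1}\mathbf 1 = O(b^{1-2d})$ of \citet{adenstedt:1974:large} for $\mathrm{FARIMA}(0,d,0)$. Together these give a leading-order bound of the form $C(b/k)^{1-2d}\,(\int |U|^2 f_d)^{1/2}(\int |V|^2 f_d)^{1/2}$, which matches the claimed rate since $k-b \asymp k$ in the nontrivial regime.

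The main obstacle is controlling the non-leading contributions uniformly in $u,v$. The Taylor-expansion corrections to $\phi_n$ and $\psi_n$ involve higher weighted moments such as $\sum_j j u_j$, each admitting an analogous Cauchy-Schwarz bound against $\Sigma_b^{-1}$ with a different test vector; each such bound costs an extra factor of $b$ in the constant but gains a factor of $k^{-1}$ from the expansion, so the correction has relative size $O(b/k)$ and is negligible. For the boundary contribution $1 \le m \le b$ to the sum (where the Taylor form of $\psi_m$ is unavailable) one estimates $|\bar\phi_{m+k}\psi_m|$ directly via Cauchy-Schwarz on that finite range, using that $|\phi_{m+k}|$ is already small because $m+k > k$ is large. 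Finally, when $k$ is close to $b$ so that $(b/(k-b))^{1-2d}$ could exceed $1$, the trivial bound $\rho_{k,b} \le 1$ already suffices, and so the nontrivial regime is $k-b$ comparable to $k$.
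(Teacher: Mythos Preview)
Your approach is genuinely different from the paper's, and the remainder control you sketch has a real gap.

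The paper's proof does not go through the spectral factorization $f_d=|h|^2$ or Adenstedt's asymptotic at all. Instead it exploits a structural positivity specific to $\mathrm{FARIMA}(0,d,0)$: the one-step (and hence multi-step) linear prediction coefficients are all strictly positive, so the columns of $\Sigma_b^{-1}\Sigma_{k,b}$ and $\Sigma_b^{-1}\Sigma_{k,b}^T$ are entrywise positive. Then the matrices $U_{k,b}$, $V_{k,b}$ in the canonical-correlation eigenproblem have positive entries, and Perron--Frobenius lets one take the maximizers $\mathbf u^*,\mathbf v^*$ with $u_j^*,v_j^*>0$. With positivity in hand, the covariance sum is bounded trivially by $(\sum_j u_j^*)(\sum_j v_j^*)\gamma_d(k-b+1)$, and the variance constraint together with $\gamma_d(i-j)\ge \gamma_d(b-1)\asymp b^{2d-1}$ gives $\sum_j u_j^*\le c\,b^{1/2-d}$ directly---no Adenstedt, no expansions, no remainders.

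The concern with your route is the remainder control, which is the crux and is only asserted. Writing $\phi_n=U(1)h_n+R_n$ and claiming ``the correction has relative size $O(b/k)$'' hides the difficulty: to bound $R_n=\sum_j u_j(h_{n-j}-h_n)$ you need control of $\sum_j j|u_j|$ (or iterated moments), but Cauchy--Schwarz against $\Sigma_b^{-1}$ only bounds the \emph{signed} sum $\sum_j j u_j$, and you have no sign information on the $u_j$. Iterating to ``higher weighted moments'' does not escape this; you would need a full expansion with uniform control of all orders, and you give no mechanism for that to converge or terminate. The boundary term $1\le m\le b$ is similarly under-specified: Cauchy--Schwarz on that range gives $(\sum_{m\le b}|\phi_{m+k}|^2)^{1/2}$, and estimating this again requires the same unestablished remainder control on $\phi_{m+k}$. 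In short, your leading-order heuristic is right and the Adenstedt bound on $|U(1)|$ is the correct scale, but the proposal does not contain an argument that the non-leading pieces stay below the target order uniformly in $u,v$.
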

\noindent This theorem is proved in Section \ref{sec:pf thm}.
\begin{remark}
We note that the bound (\ref{eq:bound farima}) is  sharp when $b\ll k$.  Indeed, suppose that $\gamma_d(n)$ is the covariance of the FARIMA$(0,d,0)$ (see (\ref{eq:gamma_d}) bellow). $\gamma_d(n)$ is decreasing in $n\in\mathbb{Z}_+$ following the asymptotic order $n^{2d-1}$. Then it is well-known  that (see e.g., Corollary 1.2 of \cite{beran:2013:long})
\[
V_d(b):= \Var[Z_1+\ldots+Z_b]  \sim c b^{2d+1}
\]
for some constant $c>0$.   Now take in (\ref{eq:rho_k,b})  $\mathbf{u}=\mathbf{v}=(1,\ldots,1)^T$.  One has for $k>b\ge 1$,
\begin{align}
\rho_{k,b}&\ge \mathrm{Corr}(Z_1+\ldots+Z_b, Z_{k+1}+\ldots+ Z_{k+b})=  V_d(b)^{-1} \sum_{i,j=1}^b\gamma_d(i-j+k)   \notag
\\&\ge c_1 b^{-2d-1} \sum_{i,j=1}^b\gamma_d(b-1+k)  \ge c_2 b^{-2d-1} b^2 (k+b-1)^{2d-1}=c_2    \left(\frac{b}{k+b-1}\right)^{1-2d},\label{eq:sharp}
\end{align}
for some constants $c_1,c_2>0$, where the first inequality in (\ref{eq:sharp}) follows from the fact that $\gamma_d(n)$ is decreasing in $n$, and $\gamma_d(i-j+k)$ attains its minimum at $i=b,j=1$.  When $b\ll k$ (e.g, when $k>2b$),  the bounds (\ref{eq:bound farima}) and (\ref{eq:sharp}) are both following the order $(b/k)^{1-2d}$.
\end{remark}

\begin{remark}\footnote{The remark is suggested by an anonymous referee.}
It is instructive to relate the bound (\ref{eq:bound farima}) to strong mixing, a weak dependence notion in the case where $\{Z_n\}$ is a stationary Gaussian sequence.
 By \citet{kolmogorov:razanov:1960:strong}, the Gaussian $\{Z_n\}$ is strong mixing\footnote{
For stationary Gaussian processes, the ``complete regularity coefficient'' $\rho_{k,b}$ is expressed as (\ref{eq:rho_k,b}). It is equivalent to $\alpha_{k,b}$ in (\ref{eq:alpha_{k,b}}), that is, $\alpha_{k,b}\le \rho_{k,b}\le  2\pi \alpha_{k,b}$. Hence a stationary Gaussian process is strong mixing if and only if (\ref{eq:strong mix Gaussian}) holds. See  \citet{kolmogorov:razanov:1960:strong} Theorem 1 and 2 or \citet{ibragimov:rozanov:1978:gaussian}, Section IV.1.  See also \citet{bai:taqqu:zhang:2015:unified}.} if and only if
\begin{equation}\label{eq:strong mix Gaussian}
\lim_{k\rightarrow\infty}\sup_{b\in \mathbb{Z}_+}\rho_{k+b,b}=0.
\end{equation}
Since the long memory $\{Z_n\}$ with a spectral density in (\ref{eq:f_d}) is not strong mixing, we must have
\begin{equation}\label{eq:non strong mixing Gaussian}
\lim_{k\rightarrow\infty}\sup_{b\in \mathbb{Z}_+}\rho_{k+b,b}>0.
\end{equation}
Let us then rewrite (\ref{eq:bound farima}) as
\begin{equation}\label{e:rhokb}
\rho_{k+b,b}\le c \left(\frac{b}{k}\right)^{1-2d}.
\end{equation}
The bound (\ref{e:rhokb}) shows that  with a fixed $b$, we have
\[
\rho_{k+b,b}=O(k^{2d-1}),
\]
and hence
$$
\lim_{k\rightarrow\infty}\rho_{k+b,b}=0.
$$
This   relation is weaker than (\ref{eq:strong mix Gaussian}), and hence in accord with the fact that   $\mathrm{FARIMA}(0,d,0)$ with $0<d<1/2$ is not strong mixing.

We also note that  it follows from Lemma \ref{Lem:reduction max} that a reversed version of (\ref{eq:strong mix Gaussian}):
\[
\lim_{b_n\rightarrow\infty} \sup_{[\epsilon n]\le k\le n} \rho_{k,b_n}=0, \quad   b_n=o(n),
\]
where $\epsilon>0$ is arbitrarily small,  is   a sufficient condition for the subsampling condition (\ref{eq:block cond gen}).
\end{remark}

\medskip
We now turn to the general case:
\begin{theorem}\label{Thm:main}
Suppose that the spectral density of $\{Z_n\}$ is given by
\begin{equation}\label{eq:f=f_df_0}
f(\lambda)=f_d(\lambda)f_0(\lambda),
\end{equation}
where $f_0(\lambda)$  is the spectral density which corresponds to a covariance function (or  Fourier coefficient)
\[\gamma_0(n)=\int_{-\pi}^{\pi} f_0(\lambda) e^{in\lambda}d\lambda. \] Assume that the following holds:
\begin{enumerate}[(a)]
\item $c_0:=\inf_{\lambda\in (-\pi,\pi]}f_0(\lambda)>0$;
\item $\sum_{n=-\infty}^\infty|\gamma_0(n)|<\infty$.
\end{enumerate}
Then, depending on the rate of decay of $\gamma_0(n)$,  we have the following:
\begin{itemize}
\item Suppose
$\gamma_0(n) = O(n^{-\alpha})$  for some $\alpha> 0$: \\ \phantom{a}
Then for any fixed small $\epsilon>0$,  there exist constants $c_1,c_2>0$,  such that for all $b,k',k$ satisfying
\begin{equation}\label{eq:m k' restrict}
1 \le   b<k'\le (1-\epsilon)k,
\end{equation}
we have
\begin{equation}\label{eq:rho_k,b bound main}
\rho_{k,b}\le c_1 \left(\frac{b}{k'-b}\right)^{1-2d} + c_2 {k'}{k^{-\alpha}}.
\end{equation}
\item Suppose $\gamma_0(n)=o(n^{-\alpha})$  for some $\alpha> 0$: \\  \phantom{a}
Then the second term  on the right-hand side of (\ref{eq:rho_k,b bound main}) can be replaced by $c_2k'o(k^{-\alpha})$.
\item Suppose $\gamma_0(n)=O(e^{-cn})$ for some $c>0$: \\ \phantom{a}
Then the second term  on the right-hand side of (\ref{eq:rho_k,b bound main}) can be replaced by $c_2 e^{-c_3 k}$ for some $c_3>0$.
\end{itemize}
\end{theorem}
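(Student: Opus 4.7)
The plan is to reduce the general case to the special FARIMA case (Theorem~\ref{Thm:special}) using the multiplicative spectral factorization $f=f_df_0$ and Fourier analysis. I first express the canonical correlation in spectral form: for trigonometric polynomials $U(\lambda)=\sum_{j=1}^b u_j e^{ij\lambda}$ and $V(\lambda)=\sum_{j=1}^b v_j e^{ij\lambda}$, one has $\mathbf{u}^T\Sigma_b\mathbf{u}=\int_{-\pi}^\pi f|U|^2\,d\lambda$ and $\mathbf{u}^T\Sigma_{k,b}\mathbf{v}=\int_{-\pi}^\pi e^{ik\lambda}\overline{U}V f\,d\lambda$, with analogous formulas for $\Sigma_b^{(d)}$ and $\Sigma_{k-n,b}^{(d)}$ when $f$ is replaced by $f_d$. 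Hypothesis (a) (i.e.\ $f_0\ge c_0>0$) gives immediately $\sqrt{\mathbf{u}^T\Sigma_b\mathbf{u}}\sqrt{\mathbf{v}^T\Sigma_b\mathbf{v}}\ge c_0\sqrt{\mathbf{u}^T\Sigma_b^{(d)}\mathbf{u}}\sqrt{\mathbf{v}^T\Sigma_b^{(d)}\mathbf{v}}$, so it suffices to bound the numerator by a constant multiple of the FARIMA denominator.

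For the numerator, I use the absolutely convergent expansion $f_0(\lambda)=\frac{1}{2\pi}\sum_n\gamma_0(n)e^{-in\lambda}$ (guaranteed by (b)) to rewrite
\[
\mathbf{u}^T\Sigma_{k,b}\mathbf{v}=\frac{1}{2\pi}\sum_{n\in\mathbb{Z}}\gamma_0(n)\,\mathbf{u}^T\Sigma_{k-n,b}^{(d)}\mathbf{v},
\]
and split the summation at $|k-n|=k'$. On the ``far'' set $\{|k-n|\ge k'\}$, since $k'>b$, Theorem~\ref{Thm:special} (together with the transpose identity $\Sigma_{-\ell,b}^{(d)}=(\Sigma_{\ell,b}^{(d)})^T$ needed when $k-n<0$) yields
\[
|\mathbf{u}^T\Sigma_{k-n,b}^{(d)}\mathbf{v}|\le c\left(\frac{b}{|k-n|-b}\right)^{1-2d}\sqrt{\mathbf{u}^T\Sigma_b^{(d)}\mathbf{u}}\sqrt{\mathbf{v}^T\Sigma_b^{(d)}\mathbf{v}},
\]
and this is maximized at $|k-n|=k'$, so this set contributes at most $c(b/(k'-b))^{1-2d}\sum_n|\gamma_0(n)|<\infty$ to the numerator, producing the first term of (\ref{eq:rho_k,b bound main}).

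On the ``close'' set $\{|k-n|<k'\}$, which has at most $2k'$ elements, every such $n$ satisfies $|n|\ge k-k'\ge \epsilon k$ because $k'\le(1-\epsilon)k$. The assumed decay of $\gamma_0$ then gives $|\gamma_0(n)|\le Ck^{-\alpha}$ (respectively $o(k^{-\alpha})$, or $Ce^{-c\epsilon k}$). Combined with the trivial estimate $|\mathbf{u}^T\Sigma_{k-n,b}^{(d)}\mathbf{v}|\le\sqrt{\mathbf{u}^T\Sigma_b^{(d)}\mathbf{u}}\sqrt{\mathbf{v}^T\Sigma_b^{(d)}\mathbf{v}}$ (valid because canonical correlations are at most $1$), the total close-set contribution is $O(k'k^{-\alpha})$, respectively $k'\cdot o(k^{-\alpha})$, or -- after absorbing the polynomial prefactor $k'$ into a slightly smaller exponent -- $O(e^{-c_3 k})$ for some $c_3>0$, yielding the second term of (\ref{eq:rho_k,b bound main}). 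Dividing by the FARIMA denominator and adjusting constants completes the proof. The main obstacle is engineering the split parameter $k'$ to satisfy both needs at once: $k'>b$ is what allows Theorem~\ref{Thm:special} to apply on the far part, while $k'\le(1-\epsilon)k$ is what forces every index in the close part to have $|n|$ of order $k$, thereby converting the assumed decay of $\gamma_0$ at argument $n$ into decay in $k$ and producing the $k'k^{-\alpha}$ trade-off.
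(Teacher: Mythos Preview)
The proposal is correct and takes essentially the same approach as the paper: expand $f_0$ as an absolutely convergent Fourier series to write the numerator as a convolution, split the sum at threshold $k'$, apply Theorem~\ref{Thm:special} on the far part together with $\sum_n|\gamma_0(n)|<\infty$, and use the trivial Cauchy--Schwarz bound plus the decay of $\gamma_0$ on the near part. The only difference is notational---the paper carries out the computation in the frequency domain via the function $g_b(\lambda)=U_b(e^{i\lambda})\overline{V_b(e^{i\lambda})}f_d(\lambda)$ and its Fourier coefficients $\widehat{g}_b(n)$ (Corollary~\ref{Cor:g_b}), whereas you phrase the same quantities in matrix form as $\mathbf{u}^T\Sigma_{k-n,b}^{(d)}\mathbf{v}$.
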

This theorem is proved in Section \ref{sec:pf thm}.
\begin{remark}
The conditions (a) and (b) together  state  that the spectral density $f_0(\lambda)$ corresponds to a short-memory  time series. The condition (b) implies that $f_0(\lambda)$ is bounded and continuous. Note that $\gamma_0(n)=O(n^{-\alpha})$ with $\alpha>1$ implies the statement (b). In view of the linear filter theory (Theorem 4.10.1 of \citet{brockwell:1991:time}), the process $\{Z_n\}$  whose spectral density is given as in (\ref{eq:f=f_df_0}) can be interpreted as a $\mathrm{FARIMA}(0,d,0)$ model with dependent noise:
\[(1-B)^d Z_n =  \xi_n,
\]
where $\{\xi_n\}$ is   short-memory  with  spectral density   $f_0(\lambda)$ and covariance function $\gamma_0(n)$,
 $B$ is the back-shift operator, and $(1-B)^d$ is interpreted using binomial series. If $f_0(\lambda)$ is chosen so that $f(\lambda)\lambda^{2d}$, $\lambda>0$,  satisfies the so-called \emph{quasi monotonicity} condition near $\lambda=0$ (see Definition 1.29 of \citet{soulier:2009:some}, which holds if $f(\lambda)\lambda^{2d}$, $\lambda>0$, is monotone in a neighborhood of $\lambda=0$), one can derive the time-domain long memory condition: the  covariance $\gamma(k)\sim c k^{2d-1}$ as $k\rightarrow\infty$ for some constant $c>0$ (see Theorem 1.37 of \citet{soulier:2009:some}).
\end{remark}
%In fact, the usual time-domain long memory condition can be derived from the frequency-domain long memory condition (\ref{eq:f=f_df_0}), if one assumes  a stronger decay condition for $\gamma_0(n)$ than the condition (b) in Theorem \ref{Thm:main}.
%\begin{proposition}\label{Pro:time domain cov}
%Let the spectral density $f(\lambda)$ be as in (\ref{eq:f=f_df_0}) with $f_0$ satisfying the conditions (a)  of Theorem \ref{Thm:main} and
%\begin{equation}\label{eq:gamma_0 decay fast}
%\sum_{n=-\infty}^{\infty} |\gamma_0(n)|n^{1-2d}<\infty.
%\end{equation}
% Then there exists a constant $c_f>0$ such that the covariance function corresponding to $f(\lambda)$ satisfies
%\[
%\gamma(k)\sim c_f k^{2d-1}
%\]
%as $k\rightarrow\infty$.
%\end{proposition}

\begin{remark}\label{Rem:f_0}
The decay rate of $\gamma_0(n)$ depends on the smoothness of  $f_0(\lambda)$. The following facts are well-known:
 \begin{itemize}
 \item If   $f_0(\lambda)$ is $\alpha$-H\"older continuous with $\alpha\in (0,1]$, then
 $$
 \gamma_0(n)=O(n^{-\alpha})
 $$ (\citet{zygmund:2002:trigonometric}, Theorem 4.7),
\item and if further $\alpha>1/2$, then the absolute convergence in (b) holds (\citet{zygmund:2002:trigonometric}, Theorem 3.1);
\item if $f_0(\lambda)$ is of bounded variation, then
$$
\gamma_0(n)=O(n^{-1})
$$
 (\citet{zygmund:2002:trigonometric}, Theorem 4.12);
\item if $f_0$ is $r $ times differentiable, then
$$
\gamma_0(n)=o(n^{-r})
$$
(integration by parts and Riemann-Lebesgue lemma);
\item if $f_0(\lambda)$ has an analytic extension $f_0(z)$ to the open complex strip $\{z:\mathrm{Im(z)}<c\}$, $c>0$, then $$
    \gamma_0(n)=O(e^{-c'n})
    $$
    for any $c'\in (0,c)$ (\citet{timan:1963:theory}, Section 3.12, Formula (19)).
    \end{itemize}
\end{remark}

\begin{remark}
Let us now focus on the second term in (\ref{eq:rho_k,b bound main}). Since it involves the exponent $\alpha$ in $\gamma_0(n)$,
and since $\gamma_0(n)$ satisfies (b),
that second term may be viewed as resulting from the short-memory part.
We will only need to consider the case $\alpha\ge 1$. If $\alpha>1$ or if $\alpha=1$ and $k'=o(k)$, then the second term always tends to zero as $k\rightarrow\infty$. It resembles a  strong-mixing-type bound, which depends only on the gap $k$ separating the infinite past from the infinite future (see (\ref{eq:strong mixing})).
\end{remark}

\begin{remark}\label{Rem:diff m}
By  straightforward modifications of the proofs (including some  statements in Section \ref{sec:proof lemma}), Theorems \ref{Thm:special} and \ref{Thm:main} can be generalized for the canonical correlation $\rho\left(\mathbf{Z}_1^{a} , \mathbf{Z}_{k+1}^{k+b}\right)$ with possibly different $a$ and $b$. Under the assumptions of Theorem \ref{Thm:special}, one can obtain the bound
\[
\rho\left(\mathbf{Z}_1^{a} , \mathbf{Z}_{k+1}^{k+b}\right)\le c\big(k-\max(a,b)\big)^{2d-1}a^{1/2-d}b^{1/2-d}.
\]
Moreover, under the assumptions of Theorem \ref{Thm:main}  with $a,b <k'$, one can obtain, e.g., in the case where $\gamma_0(n)=O(n^{-\alpha})$, $\alpha>0$, the following bound:
\begin{equation}\label{eq:rho diff m}
\rho\left(\mathbf{Z}_1^{a} , \mathbf{Z}_{k+1}^{k+b}\right)\le c_1\big(k'-\max(a,b)\big)^{2d-1}a^{1/2-d}b^{1/2-d}+c_2 k'k^{-\alpha}.
\end{equation}
The bounds for the other two cases are obtained by replacing  the second term on the right-hand side of (\ref{eq:rho diff m})  as in Theorem \ref{Thm:main}.
\end{remark}

The following theorem is the key to establish the consistency of subsampling procedures for a typical class of long-memory time series models (see (\ref{eq:subordination}) below). Let $b_n$ be a sequence which will stand for the block size in the context of subsampling.
\begin{theorem}\label{Thm:block cond}
If the spectral density $f(\lambda)$ satisfies the assumptions in Theorem \ref{Thm:main} with
$$\gamma_0(n)=O(n^{-1}),
$$
 then   any
\[
b_n=o(n)
\]
implies the subsampling
condition (\ref{eq:block cond gen}), namely
$$
\sum_{k=1}^{n} \rho_{k,b_n}=o(n),    \quad \text{as }n\rightarrow\infty.
$$
\end{theorem}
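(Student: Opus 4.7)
The plan is to apply Theorem \ref{Thm:main} (with $\alpha=1$, since $\gamma_0(n)=O(n^{-1})$) on the bulk of the sum, and to use the trivial bound $\rho_{k,b_n}\le 1$ from (\ref{eq:rho_k,b<=1}) for small $k$. Fix $\epsilon\in(0,1)$ and a constant $K>1$ to be chosen large. I would split
\[
\sum_{k=1}^{n}\rho_{k,b_n}=\sum_{k\le K b_n}\rho_{k,b_n}+\sum_{K b_n<k\le n}\rho_{k,b_n}.
\]
The head is bounded by $K b_n = o(n)$ since $b_n=o(n)$, so the whole argument reduces to controlling the tail.

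The crux is the choice of the auxiliary parameter $k'$ in (\ref{eq:rho_k,b bound main}). A naive choice like $k'=k/2$ fails: the short-memory term $c_2 k'/k$ is then of order $1$, so its sum over $k\le n$ is $\Theta(n)$, not $o(n)$. Instead, I would take
\[
k' = k^{\beta}\,b_n^{1-\beta},\qquad \beta:=\frac{1}{2-2d}\in\Big(\tfrac12,1\Big),
\]
which is engineered so that both powers of $b_n/k$ appearing in (\ref{eq:rho_k,b bound main}) coincide. Indeed $\beta(1-2d)=1-\beta=\eta$, where $\eta:=(1-2d)/(2-2d)\in(0,1/2)$. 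Choosing $K$ large enough (depending only on $\epsilon$ and $d$) ensures $2b_n\le k'\le(1-\epsilon)k$ throughout the tail, so $b_n/(k'-b_n)\le 2(b_n/k)^{\beta}$ and $k'/k=(b_n/k)^{1-\beta}$, and Theorem \ref{Thm:main} yields
\[
\rho_{k,b_n}\le C\,(b_n/k)^{\eta}.
\]

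Since $\eta<1$, the tail sum then satisfies
\[
C\sum_{K b_n<k\le n}(b_n/k)^{\eta}\le C'\,b_n^{\eta}n^{1-\eta}=C'\,n\,(b_n/n)^{\eta}=o(n),
\]
using $b_n/n\to 0$. Combining the two pieces gives the subsampling condition (\ref{eq:block cond gen}).

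The main obstacle is the identification of the optimal scaling of $k'$: a uniform choice independent of $b_n$ (such as a fixed fraction of $k$) is not balanced enough to beat the short-memory term, while too small a choice (like $k'=\sqrt{k}$) would force the restrictive $b_n=o(n^{1/2})$. The specific scaling $k'=k^{\beta}b_n^{1-\beta}$ makes both contributions in (\ref{eq:rho_k,b bound main}) decay at the same rate $(b_n/k)^{\eta}$, and it is this matching that permits the non-restrictive $b_n=o(n)$ to suffice, as advertised in Remark \ref{Rem:b=o(n) important}.
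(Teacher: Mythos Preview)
Your argument is correct. The balancing choice $k'=k^{\beta}b_n^{1-\beta}$ with $\beta=1/(2-2d)$ does exactly what you claim: it equalises the two exponents in (\ref{eq:rho_k,b bound main}) to $\eta=(1-2d)/(2-2d)\in(0,1/2)$, and since $\eta<1$ the tail sum is $O(b_n^{\eta}n^{1-\eta})=o(n)$. The verification that $2b_n\le k'\le(1-\epsilon)k$ on $\{k>Kb_n\}$ for $K$ large (depending only on $d,\epsilon$) is routine, and the constants $c_1,c_2$ from Theorem~\ref{Thm:main} are uniform once $\epsilon$ is fixed.

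The paper, however, takes a different and somewhat simpler route. It first reduces (\ref{eq:block cond gen}) via Lemma~\ref{Lem:reduction max} to the pointwise statement $\max_{[\epsilon n]\le k\le n}\rho_{k,b_n}\to 0$ for every $\epsilon>0$, and then applies Theorem~\ref{Thm:main} with the \emph{$k$-independent} choice $k'=(m+1)b_n$ for a fixed integer $m$. This gives $\rho_{k,b_n}\le c_1 m^{2d-1}+c_2(m+1)b_n/k$; on the range $k\ge[\epsilon n]$ the second term vanishes as $n\to\infty$, leaving $c_1 m^{2d-1}$, which is made small by letting $m\to\infty$. So your closing remark that ``a uniform choice independent of $b_n$\ldots is not balanced enough'' slightly mis-identifies the obstruction: a uniform choice independent of $k$ (but proportional to $b_n$) \emph{does} work, once the reduction lemma absorbs the summation. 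Your approach trades that reduction lemma for a sharper, $k$-dependent $k'$ and a direct estimate of the sum; it is a genuine alternative, and in fact yields the quantitative bound $\sum_{k=1}^n\rho_{k,b_n}\le C\,n(b_n/n)^{\eta}$, which the paper's argument does not directly produce.
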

This theorem is proved in Section \ref{sec:pf thm}.
The conditions in Theorem \ref{Thm:main} include some typical long-memory models:

\begin{example}[$\mathrm{FARIMA}(p,d,q)$]\label{Eg:FARIMA}
A $\mathrm{FARIMA}(p,d,q)$, $0<d<1/2$, time series $\{Z_n\}$ is the solution of the following stochastic difference equation:
\[
\Phi(B)(1-B)^d Z_n = \Theta(B) \xi_n,
\]
where $\{\xi_n\}$ is a white noise sequence with variance $\sigma^2$,   $\Phi(z)=1-\phi_1z-\ldots-\phi_pz^p$  is a polynomial with no zeros on the unit complex circle, and $\Theta(z)=1+\theta_1z+\ldots+\theta_qz^q$. See Definition 13.2.2 of \citet{brockwell:1991:time}. It is known that the spectral density of $\{Z_n\}$ is given by (Theorem 13.2.2 of \citet{brockwell:1991:time}):
\begin{equation}
f(\lambda)=\frac{\sigma^2}{2\pi} |1-e^{i\lambda}|^{-2d} \frac{|\Theta(e^{i\lambda})|^2}{|\Phi(e^{i\lambda})|^2}.
\end{equation}
The function $f_0(\lambda):=\sigma^2|\Theta(e^{i\lambda})|^2/|\Phi(e^{i\lambda})|^2$ can be extended to an analytic $f(z)$  in a strip $\{z,~\mathrm{Im}(z)<c\}$, $c>0$, since all zeros of $\Phi(z)$ are outside an open annulus containing the unit circle. By Remark \ref{Rem:f_0}, $\gamma_0(n)$ decays exponentially. Hence the assumptions of Theorem \ref{Thm:main} are satisfied and the bound (\ref{eq:rho_k,b bound main}) holds with the second term decaying exponentially. Thus, by Theorem \ref{Thm:block cond},  the block size $b_n=o(n)$ implies the subsampling condition (\ref{eq:block cond gen}).
\end{example}

\begin{example}[Fractional Gaussian noise with $H>1/2$]\label{Eg:FGN}
Fractional Gaussian noise with Hurst index $H\in (0,1)$, which is the increments of fractional Brownian motion (see, e.g., \citet{taqqu:2003:fractional}),  is a centered stationary Gaussian sequence $\{Z_{n}\}$ whose convariance is given by
\[
\gamma(n)=\frac{\sigma^2}{2}\left(|n+1|^{2H}-|n|^{2H}+|n-1|^{2H}\right), \quad \sigma^2>0.
\]
Note that $\gamma(n)\sim c n ^{2H-2}$ as $n\rightarrow\infty$ for some $c>0$.
The spectral density of $\{Z_n\}$ is derived by \citet{sinai:1976:self}, namely, for some $c_1>0$, we have
\[
f(\lambda)=c_1 |1-e^{i\lambda}|^2\sum_{k=-\infty}^{\infty}\frac{1}{|\lambda+2\pi k|^{2H+1}}.
\]
We are interested in the case where
$$
d:=H-1/2>0,
 $$
 the long-memory regime. We mention that in the so-called antipersistent regime $H<1/2$, one can show using (\ref{eq:crude bound rho_k,b}) that the condition (\ref{eq:block cond gen})  holds under $b_n=o(n^{1-\epsilon})$ for an $\epsilon>0$ arbitrarily small. See \citet{bai:taqqu:zhang:2015:unified}.  Write the spectral density as
\[
f(\lambda)=f_d(\lambda) f_0(\lambda),
\]
where $f_d$ is as in (\ref{eq:f_d}), and
\begin{align}\label{eq:f_0 fgn}
f_0(\lambda)&=c_1 |1-e^{i\lambda}|^{2d+2} \sum_{k=-\infty}^\infty  \frac{1}{|\lambda+2\pi k|^{2d+2}}\notag\\
&=c_1\left[ \left(\frac{4\sin(\lambda/2)^2}{\lambda^2}\right)^{d+1} + \left(4\sin(\lambda/2)^2\right)^{d+1}  \sum_{k\neq 0}  \frac{1}{|\lambda+2\pi k|^{2d+2}}
\right].
\end{align}
In view of the term $\left(4{\sin(\lambda/2)^2}/{\lambda^2}\right)^{d+1}$, we see that $f_0(\lambda)$ is  bounded below away from $0$ when $\lambda\in (-\pi,\pi]$. Furthermore,  note that since $d\in (0,1/2)$, the series $\sum_{k\neq 0}  |\lambda+2\pi k|^{-2d-2}$ and its term-by-term derivatives (with respect to $\lambda$)  converge uniformly for all $\lambda\in(-\pi,\pi]$.  By  Theorem 7.17 of \citet{rudin:1976:principles}, the series and hence
$f_0(\lambda)$ is infinitely differentiable. Then the assumptions of Theorem \ref{Thm:main} hold with arbitrarily large $\alpha>0$ in Condition (3) (see Remark \ref{Rem:f_0}), and hence so does the bound (\ref{eq:rho_k,b bound main}). Thus, by Theorem \ref{Thm:block cond},  the block size $b_n=o(n)$ implies the subsampling condition (\ref{eq:block cond gen}).
\end{example}

\subsection{Multivariate extension}\label{Sec:multi var}

Theorem   \ref{Thm:block cond} can be directly extended to the case of a multivariate second-order stationary process
 $$
\{\mathbf{Z}_n=(Z_{n,1},\ldots,Z_{n,J})\}
 $$
 with \emph{uncorrelated} components. One can define similarly the canonical correlation $\rho_{k,b}$ between  two blocks $(\mathbf{Z}_{1},\ldots \mathbf{Z}_b)$ and $(\mathbf{Z}_{k+1},\ldots,\mathbf{Z}_{k+b})$. Let $\rho_{k,b,j}$ be the   canonical correlation between $(Z_{1,j},\ldots,Z_{b,j})$ and $(Z_{k+1,j},\ldots,Z_{k+b,j})$ involving the $j$-th component of the vectors, $j=1,\ldots,J$.   Since the covariance between $Z_{n_1,j_1}$ and $Z_{n_2,j_2}$ vanishes if $j_1\neq j_2$, the  covariance matrices $\Sigma_b$ in (\ref{eq:Sigma_m}) and $\Sigma_{k,b}$ in (\ref{eq:Sigma_k,b})  corresponding to $\{\mathbf{Z}_n\}$ are block-diagonal, and hence so are the $U_{k,b}$ and $V_{k,b}$ matrices defined in (\ref{eq:U matrix}) and (\ref{eq:V matrix}) below. Because the eigenvalues of a block-diagonal matrix consists of eigenvalues of each block, one has in view of Lemma \ref{Lem:can corr matrix} below that
\begin{equation}\label{eq:rho_k,b max}
\rho_{k,b}=
\max_{1\le j\le J}\rho_{k,b,j}.
\end{equation}
Hence to establish the subsampling condition (\ref{eq:block cond gen}) for $\rho_{k,b}$ of $\{\mathbf{Z}_n\}$ which has uncorrelated components, one only needs  to establish the corresponding ones for  each component $\{Z_{n,j}\}$. See Example \ref{eg:nonlinear} below for an application of such a multivariate case to a nonlinear time series.

\section{Validity of subsampling}\label{sec:app subsampling}
In this section, we discuss the role that the subsampling condition (\ref{eq:block condition alpha}) plays in ensuring the consistency of a subsampling procedure.

\subsection{Subsampling of time series}
We start with a brief introduction to  subsampling procedures for time series.  Let $\{X_n\}$ be a stationary time series. One is interested in using the quantity
$$
T_n(\mathbf{X}_1^n;\theta)=T_n(X_1,\ldots,X_n;\theta)
 $$
 for inference, where $\theta$ is an unknown parameter, which may not be present in some cases.  Suppose that
\begin{equation}\label{eq:T_n ConvD}
T_n(\mathbf{X}_1^n;\theta)\ConvD T
\end{equation}
as $n\rightarrow\infty$, where $T$ is some random variable with distribution function $F_T(x)$. In general, the limit $T$ depends on $\theta$. But we suppress this in the notation for simplicity. Whenever we mention convergence of distribution functions, we mean convergence at continuity points of the limit.  The  convergence (\ref{eq:T_n ConvD}) is established on a case-by-case basis.

 We are interested in the distribution function
\begin{equation} \label{eq:F_T_n}
{F}_{T_n}(x)=P(T_n(\mathbf{X}_1^n;\theta)\le x),
\end{equation}
which is in general  difficult to obtain. Suppose that the limit distribution $F_T(x)$ is  not available either, due for example,  to the presence of nuisance parameters. So we resort to some resampling procedure. Consider  the statistic  defined on a  block of length  $b$ starting at $i$, namely,
$$
T_b(\mathbf{X}_i^{i+b-1};\theta).
 $$
 One expects that the distribution of $T_b(\mathbf{X}_i^{i+b-1};\theta)$ is close to that of $T_n(\mathbf{X}_1^n;\theta)$   since, in view of (\ref{eq:T_n ConvD}), both are close to $F_T$ when $b$ and $n$ are reasonably large. By varying $i$ while keeping $b$ fixed, one obtains many subsamples. One then wants to use   the empirical distribution
\begin{equation}\label{eq:emp distr}
\widehat{F}_{n,b}^*(x)=\frac{1}{n-b+1}\sum_{i=1}^{n-b+1} \mathrm{I}\{T_b(\mathbf{X}_i^{i+b-1};\theta)\le x\}
\end{equation}
as an approximation of $F_{T_n}$ for inference. But $\widehat{F}_{n,b}^*(x)$ involves the unknown parameter $\theta$. We thus replace $\theta$ by a consistent estimate $\widehat{\theta}_n$ which depends on the whole sample $\{X_1,\ldots,X_n\}$. This leads to
\begin{equation}\label{eq:emp distr no star}
\widehat{F}_{n,b}(x)=\frac{1}{n-b+1}\sum_{i=1}^{n-b+1} \mathrm{I}\{T_b(\mathbf{X}_i^{i+b-1};\widehat{\theta}_n)\le x\}.
\end{equation}
Note that each term $\widehat{F}_{n,b_n}(x)$ depends on the whole sample $\{X_1,\ldots,X_n\}$ while each term of $\widehat{F}_{n,b}^*(x)$  depends only on the an individual block  $\{X_i,\ldots,X_{i+b-1}\}$.
On the other hand $\widehat{F}_{n,b_n}(x)$ is computable from data since it does not involve the unknown parameter $\theta$.

\begin{example}
As  a typical case, suppose  (see Chapter 3 of \citet{politis:1999:subsampling}) that $\theta$ is indeed the parameter on which we want to carry out the inference. Let $\widehat{\theta}_{n,b,i}$ be an  estimator of $\theta$ computed using the block  $\{X_i,\ldots,X_{i+b-1}\}$. To get an approximation of the distribution of $\widehat{\theta}_n-\theta$ for inference, one then proposes to use the empirical distribution
\begin{equation}\label{eq:emp distr theta}
\widehat{F}_{n,b}(x)=\frac{1}{n-b+1}\sum_{i=1}^{n-b+1} \mathrm{I}\{\tau_b(\widehat{\theta}_{n,b,i}-\widehat{\theta}_n) \le x\},
\end{equation}
where  $\tau_b$ is an appropriate deterministic normalizer which ensures that $\tau_n(\widehat{\theta}_n-\theta)$ converges in distribution as $n\rightarrow\infty$.   In this case, we have
\[
T_b(\mathbf{X}_i^{i+b-1};\theta)=\tau_b(\widehat{\theta}_{n,b,i}-\theta),\qquad
T_b(\mathbf{X}_i^{i+b-1};\widehat{\theta}_n)=\tau_b(\widehat{\theta}_{n,b,i}-\widehat{\theta}_n).
\]
If the convergence rate $\tau_n$ is unknown, it needs to be consistently estimated by some $\widehat{\tau}_n$  in the sense that $\widehat{\tau}_n/\tau_n$ converges in probability to a positive constant (see \citet{politis:1999:subsampling}, Chapter 8). In the cases of heavy tails or long memory, often  $\tau_b$  needs to be  replaced by a random  normalizer $\widehat{\tau}_{n,b,i}$ computed using the block $\{X_i,\ldots,X_{i+b-1}\}$, and $\widehat{\tau}_{n,b,i}/\tau_b$ would typically only converge in distribution as $b\rightarrow\infty$. This is done, e.g., in \citet{romano:wolf:1999:subsampling}, \citet{jach:2012:subsampling} and \citet{bai:taqqu:zhang:2015:unified}. We mention that even without a proper scale estimate $\widehat{\tau}_n$, the shape of the unscaled empirical distribution
$$
\frac{1}{n-b+1}\sum_{i=1}^{n-b+1} \mathrm{I}\{(\widehat{\theta}_{n,b,i}-\widehat{\theta}_n) \le x\}
$$
can be useful for diagnostic purposes (see \citet{sherman:carlstein:1996:replicate}).
\end{example}
\begin{example}
There are cases where no  unknown  $\theta$ is involved. For example in change-point problems using the Wilcoxon test statistics, which involves $\sum_{i=1}^k R_i -k/n \sum_{i=1}^n R_i$ where $R_i$'s are ranks. In this case, one can suppress the  $\theta$ in (\ref{eq:emp distr}) and the $\widehat{\theta}$ in (\ref{eq:emp distr no star}), and hence the distinction between $\widehat{F}_{n,b}^*(x)$ and $\widehat{F}_{n,b}(x)$. See \citet{betken:Wendler:2015:subsampling}.
\end{example}

\subsection{Asymptotic consistency of subsampling procedures}\label{sec:asymp consis}
To obtain convergence results, we let the block size $b=b_n$ depend explicitly on the sample size $n$,  which  tends to infinity as $n\rightarrow\infty$.
\begin{definition}\label{Def:consistency}
Let $\widehat{F}_{n,b_n}(x)$ be as  in (\ref{eq:emp distr no star}) and let $F_{T_n}(x)$ be as in (\ref{eq:F_T_n}).
We say that the subsampling procedure is consistent, if
\begin{equation}\label{eq:subsample consistency}
|\widehat{F}_{n,b_n}(x)-F_{T_n}(x)|\ConvP 0
\end{equation}
as $n\rightarrow \infty$ for $x$ at the continuity point of the  limit distribution $T$.
\end{definition}
By some standard argument using Polya's Theorem (see the proofs of Theorem 3.1 and Corollary 3.1 in \citet{bai:taqqu:zhang:2015:unified}), if $F_T(x)$ is continuous, one can have a stronger form of consistency, namely,
$$
\sup_x|\widehat{F}_{n,b_n}(x)-F_{T_n}(x)|\ConvP 0.
$$

When proving the consistency of the empirical distribution of subsampling, for example of
$\widehat{F}_{n,b_n}$   in (\ref{eq:emp distr theta}),  a common strategy is  to first replace $T_b(\mathbf{X}_i^{i+b-1};\widehat{\theta}_n)$, if necessary,  by $T_b(\mathbf{X}_i^{i+b-1};\theta)$, so that it depends only on the block $\{X_i,\ldots,X_{i+b-1}\}$. One needs to show that this modification is asymptotically  negligible (see \citet{politis:1999:subsampling}, the proofs of Theorem 3.2.1 and Theorem 11.3.1).
After this reduction, we are basically working with $\widehat{F}^*_{n,b_n}(x)$ in (\ref{eq:emp distr}). To establish the consistency (\ref{eq:subsample consistency}),  we then only need to show  that
\begin{equation}\label{eq:F_b,n F_T}
 \widehat{F}_{n,b_n}^*(x) \ConvP F_T(x) ,
\end{equation}
since $F_{T_n}(x)\rightarrow F_T(x)$ by the assumption (\ref{eq:T_n ConvD}).
 To do so, we write the
bias-variance decomposition of the mean-square error:
\begin{align}\label{eq:bias-var}
\E \left| \widehat{F}_{n,b_n}^*(x)-F_T(x) \right|^2&=\left[\E\widehat{F}_{n,b_n}^*(x) -F_T(x)\right]^2+ \left[\E\left(\widehat{F}_{n,b_n}^*(x)^2\right)-\left(\E \widehat{F}_{n,b_n}^*(x)\right)^2\right]\notag\\&=\left|F_{T_{b_n}}^*(x)- F_T(x)  \right|^2+\Var\left[\widehat{F}_{n,b_n}^*(x)\right]
\end{align}
since
$$
\E\widehat{F}_{n,b_n}^*(x) = F_{T_{b_n}}^*(x)
$$
 in view of  (\ref{eq:emp distr}) and (\ref{eq:F_T_n}).
The first term in (\ref{eq:bias-var}) is the
 bias term, which converges to zero due to the assumption $b_n\rightarrow \infty$ and (\ref{eq:T_n ConvD}). The key is to bound the second variance term. By a standard argument (see the proof of Theorem 3.1 of \citet{bai:taqqu:zhang:2015:unified}),
\begin{equation}\label{eq:variance bound}
\Var\left[\widehat{F}_{n,b_n}^*(x)\right]\le \frac{2}{n-b_n+1}\sum_{k=0}^{n-b_n+1} \left|\Cov[\mathrm{I}\{ T_{b_n}(\mathbf{X}_1^{b_n};\theta)\le x \}, \mathrm{I}\{T_{b_n}(\mathbf{X}_{k+1}^{k+b_n};\theta)\le x\}  ]\right|.
\end{equation}
Note that $\mathrm{I}\{ T_{b_n}(\mathbf{X}_{k+1}^{k+b_n};\theta)\le x \}$ is a function of $\{X_{k+1},\ldots,X_{k+b_n}\}$ which is measurable with respect to the sigma field $\mathcal{F}_{k+1}^{k+b_n}$. Because $\Cov[\mathrm{I}_A,\mathrm{I}_B]=P(A)P(B)-P(A\cap B)$, we have
\[
\left|\Cov[\mathrm{I}\{ T_{b_n}(\mathbf{X}_1^{b_n};\theta)\le x \}, \mathrm{I}\{T_{b_n}(\mathbf{X}_{k+1}^{k+b_n};\theta)\le x\}  ]\right|\le \alpha_{k,b_n},
\]
where $\alpha_{k,b_n}$ is the between-block mixing coefficient defined in (\ref{eq:alpha_{k,b}}).
Hence from the variance bound (\ref{eq:variance bound}), one has by the subsampling condition (\ref{eq:block condition alpha}) that
\begin{equation}\label{eq:variance bound alpha}
\Var\left[\widehat{F}_{n,b_n}^*(x)\right]\le \frac{2}{n-b_n+1}\sum_{k=0}^{n-b_n+1} \alpha_{k,b_n} \rightarrow 0
\end{equation}
when $n\rightarrow\infty$ and $b_n=o(n)$.
For convenience, we formulate below a corresponding result in  the context of Gaussian subordination  (\ref{eq:subordination}).
\begin{proposition}\label{Pro:var}
Suppose that $\{X_n\}$ follows the Gaussian subordination model (\ref{eq:subordination}) and let $\widehat{F}_{n,b}^*(x)$ be defined as in (\ref{eq:emp distr}). If the subsampling condition (\ref{eq:block cond gen}) holds and $b_n=o(n)$, we have
\begin{equation*}
\lim_{n\rightarrow\infty} \Var\left[\widehat{F}_{n,b_n}^*(x)\right]= 0.
\end{equation*}
\end{proposition}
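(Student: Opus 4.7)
The plan is to follow the bias--variance road already sketched in the paragraphs preceding the proposition, supplying the two ingredients that turn it into a proof under long memory and Gaussian subordination.

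First, I would write the variance explicitly as a double sum of indicator covariances,
\[
\Var\left[\widehat{F}_{n,b_n}^*(x)\right]=\frac{1}{(n-b_n+1)^2}\sum_{i,j=1}^{n-b_n+1}\Cov\Big(\mathrm{I}\{T_{b_n}(\mathbf{X}_i^{i+b_n-1};\theta)\le x\},\,\mathrm{I}\{T_{b_n}(\mathbf{X}_j^{j+b_n-1};\theta)\le x\}\Big),
\]
then use stationarity of $\{X_n\}$ to reduce each covariance to one depending only on the gap $|i-j|$. Counting the number of index pairs at each gap and bounding $\Var(\mathrm{I}\{\cdot\})\le 1$ yields the bound (\ref{eq:variance bound}).

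Second, since $\mathrm{I}\{T_{b_n}(\mathbf{X}_1^{b_n};\theta)\le x\}\in\mathcal{F}_1^{b_n}$ and $\mathrm{I}\{T_{b_n}(\mathbf{X}_{k+1}^{k+b_n};\theta)\le x\}\in\mathcal{F}_{k+1}^{k+b_n}$, the elementary identity $|\Cov(\mathrm{I}_A,\mathrm{I}_B)|=|P(A\cap B)-P(A)P(B)|$ together with the definition (\ref{eq:alpha_{k,b}}) gives $|\Cov(\cdot,\cdot)|\le \alpha_{k,b_n}$. Here Gaussian subordination enters: since $X_n=G(Z_n)$ with $\{Z_n\}$ a stationary Gaussian process, the Kolmogorov--Rozanov-type inequality (\ref{eq:alpha<=rho}) yields $\alpha_{k,b_n}\le \rho_{k,b_n}$, hence
\[
\Var\left[\widehat{F}_{n,b_n}^*(x)\right]\le \frac{2}{n-b_n+1}\sum_{k=0}^{n-b_n+1}\rho_{k,b_n}.
\]

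Third, I would split the sum at $k=b_n$. For the overlapping range $0\le k<b_n$ I only use $\rho_{k,b_n}\le 1$ from (\ref{eq:rho_k,b<=1}), contributing at most $b_n/(n-b_n+1)$, which is $o(1)$ since $b_n=o(n)$. For the non-overlapping range $b_n\le k\le n-b_n+1$ I invoke the subsampling condition (\ref{eq:block cond gen}) directly: $\sum_{k=1}^n\rho_{k,b_n}=o(n)$, so this contribution also vanishes after dividing by $n-b_n+1$. Combining the two pieces gives the claim.

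There is no genuine obstacle here since all the nontrivial analytical work is packaged into the two hypotheses — the inequality $\alpha_{k,b}\le \rho_{k,b}$ supplied by Gaussian subordination and the condition (\ref{eq:block cond gen}) on $\sum_k \rho_{k,b_n}$. The only point requiring care is the bookkeeping that isolates the $O(b_n)$ overlapping-block indices, where one has no decay, from the remaining indices where the canonical correlation bound provides one.
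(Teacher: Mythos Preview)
Your proposal is correct and follows essentially the same approach as the paper's proof, which simply notes that (\ref{eq:alpha<=rho}) converts the subsampling condition (\ref{eq:block cond gen}) into (\ref{eq:block condition alpha}) and then invokes (\ref{eq:variance bound alpha}). Your third step, splitting off the overlapping range $0\le k<b_n$, is harmless but unnecessary: since (\ref{eq:block cond gen}) already gives $\sum_{k=1}^n\rho_{k,b_n}=o(n)$ for all $k\ge 1$ and the single term $\rho_{0,b_n}\le 1$, one may bound $\sum_{k=0}^{n-b_n+1}\rho_{k,b_n}\le 1+\sum_{k=1}^{n}\rho_{k,b_n}=o(n)$ directly without any split.
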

\begin{proof}
In view of (\ref{eq:alpha<=rho}), the condition (\ref{eq:block cond gen}) implies the condition (\ref{eq:block condition alpha}).  Then apply (\ref{eq:variance bound alpha}).
\end{proof}
This proposition shows why the subsampling condition (\ref{eq:block cond gen}) is useful. Consequently, we have
\begin{theorem}\label{Thm:subsample consistency}
Assume that the following hold:
\begin{enumerate}
 \item $\{X_n\}$ is given by the Gaussian subordination model (\ref{eq:subordination}), where the underlying Gaussian  $\{Z_n\}$ satisfies the assumptions of Theorem \ref{Thm:block cond}.
\item
 As $n\rightarrow\infty$,  the convergence (\ref{eq:T_n ConvD}) holds.
\item Let $x$ be any continuity point of $F_T(x)=P(T\le x)$. Assume that for any $\epsilon>0$ and $\delta>0$, with probability tending to $1$ as $n\rightarrow\infty$, one has the following:
\[
\widehat{F}_{n,b_n}^*(x-\epsilon)-\delta\le \widehat{F}_{n,b_n}(x)\le  \widehat{F}^*_{n,b_n}(x+\epsilon)+\delta.
\]
\end{enumerate}
Then if $b_n\rightarrow \infty$ and $b_n=o(n)$ as $n\rightarrow\infty$, the consistency of subsampling in the sense of Definition \ref{Def:consistency} holds, namely,
\[|F_{T_n}(x)-\widehat{F}_{n,b_n}(x)|\ConvP 0\]
for $x$ at any continuity point of $F_T$.

\begin{proof}
Assumption 1 yields Proposition \ref{Pro:var}. By Assumption 2, we have $F_{T_n}(x)\rightarrow F_T(x)$ as $n\rightarrow\infty$. Combining Proposition \ref{Pro:var} and Assumption 2, we get
 $$
 \widehat{F}^*_{n,b_n}(x) \ConvP F_T(x)
  $$
  by (\ref{eq:bias-var}).  Combining this further with  Assumption 3, and using the arguments in the proof of  Theorem 3.1 of \citet{bai:taqqu:zhang:2015:unified} yields
  $$\widehat{F}_{n,b_n}(x)\ConvP F_T(x).
  $$
    The conclusion  then follows from the triangle inequality
 \[
 |F_{T_n}(x)-\widehat{F}_{n,b_n}(x)|\le |F_{T_n}(x)-F_T(x)|+|\widehat{F}_{n,b_n}(x)- F_T(x)|.
\]
\end{proof}
\begin{remark}
Assumption 3 needs to be checked in a case-by-case basis. See, e.g.,  the proofs of  Theorem 3.1 of \citet{bai:taqqu:zhang:2015:unified} and  Theorem 11.3.1 of \citet{politis:1999:subsampling}. It may   take different forms depending on the specific problem at hand.
\end{remark}

\begin{remark}
Theoretical understanding of the optimal choice of block size $b$ in subsampling has been in general a difficult problem (see Chapter 9 of \citet{politis:1999:subsampling}). Our results have some heuristic implication for the choice of $b$. Consider the bias-variance decomposition in (\ref{eq:bias-var}) for the empirical distribution $\widehat{F}_{n,b_n}^*(x)$ (which involves the unknown $\theta$) instead of that of $\widehat{F}_{n,b_n}(x)$. For simplicity, let  $\{X_n=Z_n\}$ be a $\mathrm{FARIMA}(0,d,0)$ Gaussian process.  From (\ref{eq:alpha<=rho}),  (\ref{eq:variance bound alpha}) and Theorem \ref{Thm:special}, one can derive    that when $b_n\ll n$, the variance term satisfies
\begin{align}\label{eq:var part}
\Var\left[\widehat{F}_{n,b}^*(x)\right]\le& \frac{2}{n-b_n+1}\sum_{k=0}^{n-b_n+1} \rho_{k,b_n}\le \frac{c}{n}
\left[\sum_{k=0}^{b_n}\rho_{k,b_n} +\sum_{k=b_n+1}^n \rho_{k,b_n}\right]\notag\\
\le &c_1 \frac{b_n}{n} +  \frac{c_2}{n}\left(\sum_{k=1}^n k^{2d-1}  \right) b_n^{1-2d}   \le c_1 \frac{b_n}{n}
+ c_2 \left(\frac{b_n}{n}\right)^{1-2d}\le c_3\left(\frac{b_n}{n}\right)^{1-2d}.
\end{align}
On the other hand, one also needs the rate at which the bias term $[F_{T_{b}}^*(x)- F_T(x) ]^2$
in (\ref{eq:bias-var})
tends to zero.  It is in  general difficult to assess such a rate, unless $T$ is some special random variable, for example a  Gaussian. Suppose such a rate is available: $[F_{T_{b}}^*(x)- F_T(x) ]^2\asymp b^{-2\gamma}$ for some  $\gamma>0$. Suppose also that the behavior of the variance is captured exactly by the bound in (\ref{eq:var part}).  Then by  solving $\left({b/n}\right)^{1-2d}\asymp b^{-2\gamma}$, one gets the optimal order
$$
b\asymp n^{(1-2d)/(1-2d+2\gamma)}.
 $$
 Under weak dependence, the usual Berry-Esseen rate of Gaussian approximation is $\gamma=1/2$.  If one supposes that the rate is given by $\gamma=1/2-d$ under long memory, then an optimal order of $b$ would be
\[b\asymp n^{1/2},\]
which has been empirically observed to perform well in the context of subsampling under long memory (see e.g., \citet{hall:1998:sampling},  \citet{betken:Wendler:2015:subsampling} and \citet{zhang:2013:block}). Note that the preceding argument is only heuristic and is not necessarily correct. It does not apply for example to the case of the subsampling estimation of the scale parameter $\sigma_{n,d}:=n^{-1-2d}\Var[\sum_{i=1}^n Z_i]$ considered in \citet{kim:nordman:2011:properties}, where the optimal order can be $b_n\asymp n^{a}$ for $a$ ranging to the whole interval $(0,1)$, depending on $d$.
\end{remark}

\end{theorem}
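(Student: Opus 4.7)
The plan is to start with the triangle inequality
\[
|F_{T_n}(x) - \widehat{F}_{n,b_n}(x)| \le |F_{T_n}(x) - F_T(x)| + |\widehat{F}_{n,b_n}(x) - F_T(x)|,
\]
which reduces the task to two independent items: a deterministic convergence at continuity points, and a probabilistic convergence of the estimator. The first piece, $F_{T_n}(x)\to F_T(x)$ at any continuity point, is immediate from Assumption 2 and the Portmanteau theorem. All of the real work is in showing $\widehat{F}_{n,b_n}(x)\ConvP F_T(x)$.

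For the probabilistic piece I would first establish the auxiliary convergence $\widehat{F}^*_{n,b_n}(x)\ConvP F_T(x)$ for the infeasible empirical distribution (\ref{eq:emp distr}) that uses the true $\theta$. Here the bias–variance decomposition (\ref{eq:bias-var}) is the natural tool: the bias $|F_{T_{b_n}}(x)-F_T(x)|$ tends to $0$ because $b_n\to\infty$ and $x$ is a continuity point (Assumption 2 again), while the variance tends to $0$ by Proposition \ref{Pro:var}, whose hypotheses are exactly Assumption 1 (Gaussian subordination) plus the subsampling condition (\ref{eq:block cond gen}), the latter being supplied by Theorem \ref{Thm:block cond} under $b_n=o(n)$. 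So $\widehat{F}^*_{n,b_n}(x)\to F_T(x)$ in $L^2$, hence in probability, at each continuity point of $F_T$.

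The final step is to transfer this convergence from $\widehat{F}^*_{n,b_n}$ (involving $\theta$) to $\widehat{F}_{n,b_n}$ (involving $\widehat{\theta}_n$), which is the exact role of Assumption 3. Fix $x$ a continuity point of $F_T$ and $\eta>0$; by continuity of $F_T$ at $x$, pick $\epsilon>0$ small enough that $F_T(x\pm\epsilon)$ are within $\eta$ of $F_T(x)$, and also small enough that $x\pm\epsilon$ are themselves continuity points of $F_T$ (both possible since continuity points are dense). Pick $\delta=\eta$. The sandwich in Assumption 3, combined with the already-established $\widehat{F}^*_{n,b_n}(x\pm\epsilon)\ConvP F_T(x\pm\epsilon)$, gives
\[
F_T(x-\epsilon)-2\eta \;\le\; \widehat{F}_{n,b_n}(x) \;\le\; F_T(x+\epsilon)+2\eta
\]
with probability tending to one, so $|\widehat{F}_{n,b_n}(x)-F_T(x)|\le 3\eta$ with probability tending to one. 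Since $\eta$ was arbitrary, $\widehat{F}_{n,b_n}(x)\ConvP F_T(x)$, and the theorem follows from the opening triangle inequality.

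I do not expect a genuine obstacle here: the statement is essentially a clean assembly of three ingredients that have each been placed exactly where needed — Assumption 1 feeds Proposition \ref{Pro:var} (which is where the novel contribution of this paper, Theorem \ref{Thm:block cond}, is invoked), Assumption 2 controls both the deterministic limit and the bias, and Assumption 3 is precisely the form of continuous-mapping/sandwich one needs to pass from the infeasible to the feasible empirical distribution. The only mild care point is insisting that the perturbed points $x\pm\epsilon$ in the sandwich lie in the continuity set of $F_T$, which is handled trivially by density of continuity points.
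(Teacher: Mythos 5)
Your proposal is correct and follows essentially the same route as the paper: the triangle inequality, the bias--variance decomposition (\ref{eq:bias-var}) together with Proposition \ref{Pro:var} to get $\widehat{F}^*_{n,b_n}(x)\ConvP F_T(x)$, and then Assumption 3 to pass to $\widehat{F}_{n,b_n}(x)$. The only difference is that you spell out the sandwich argument (including the careful choice of $\epsilon$ so that $x\pm\epsilon$ are continuity points of $F_T$), which the paper delegates to the proof of Theorem 3.1 of \citet{bai:taqqu:zhang:2015:unified}.
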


\subsection{Application to cases considered in the literature}
Based on Theorem \ref{Thm:block cond}, some results on  subsampling procedures under the Gaussian subordination model (\ref{eq:subordination}) considered in the literature can be established or improved as indicated below.

\begin{example}\label{eg:hall}
\citet{hall:1998:sampling} considered subsampling for the sample mean. Their assumptions, however,  allow only the case where the underlying Gaussian $\{Z_n\}$ in (\ref{eq:subordination}) is completely regular, which is equivalent to strong mixing (see \citet{ibragimov:rozanov:1978:gaussian}, Section IV.1), but not the long memory regime considered here.
Replacing their assumptions on $\{Z_n\}$ by the ones described in Theorem \ref{Thm:block cond}, the condition $b_n=o(n)$ will yield consistency of their subsampling procedure.  See also \citet{lahiri:2003:resampling}, Section 10.4.
\end{example}

\begin{example}
\citet{conti:2008:confidence} considered subsampling for an estimator of the long memory parameter. They assumed as in \citet{hall:1998:sampling} that  $\{Z_n\}$ is completely regular.
Replacing their assumptions on $\{Z_n\}$ by the ones given in Theorem \ref{Thm:block cond}, the condition $b_n=o(n)$ will yield consistency of their subsampling procedure.
\end{example}

\begin{example}
\citet{psaradakis:2010:inference} studied subsampling for the one-sample sign statistic under the same framework of \citet{hall:1998:sampling}. Replacing the assumptions on $\{Z_n\}$  in Theorem \ref{Thm:block cond}, the block size condition $b_n=o(n)$ will yield consistency of the subsampling procedure.
\end{example}

 \begin{example}
\citet{betken:Wendler:2015:subsampling} considered subsampling for a general statistic as in (\ref{eq:emp distr}) under the model (\ref{eq:subordination}), and also discussed a robust change-point test as an example. If their assumptions on $\{Z_n\}$ are replaced by those of Theorem 2.3 (a), their block size condition $b_n=o(n^{1-d-\epsilon})$ with arbitrarily small $\epsilon>0$    can  be relaxed to $b_n=o(n)$ and  consistency still holds for a general statistic.
 \end{example}

\begin{example}\label{eg:bai taqqu zhang}
 \citet{bai:taqqu:zhang:2015:unified} studied subsampling  for sample mean with the self-normalization considered in \citet{shao:2010:self}, which avoids dealing with various nuisance parameters. They adopted the assumptions in Theorem \ref{Thm:block cond}, so that $b_n=o(n)$ yields  consistency. See \citet{bai:taqqu:zhang:2015:unified}, Corollary 3.1.
 \end{example}

\begin{example}\label{eg:nonlinear}
Suppose that $\{X_n\}$ is subordinated to a multivariate stationary Gaussian process
$$
\{\mathbf{Z}_n=(Z_{n,1},\ldots,Z_{n,J})\}
 $$
 with independent components.  As noted in Section \ref{Sec:multi var}, the analog of $\rho_{k,b}$  in (\ref{eq:rho_k,b}) for $\{\mathbf{Z}_n\}$ is  the maximum of the $\rho_{k,b,j}$'s computed from the individual  $\{Z_{n,j}\}$'s.  This fact is useful when one  considers  some nonlinear time series models.

For example, let
$$
\mathbf{Z}_n=(\eta_n,\xi_n)
 $$
 be a bivariate stationary Gaussian process, where   $\{\eta_n\}$ satisfies the long memory conditions as in Theorem \ref{Thm:block cond}, and $\{\xi_n\}$ is i.i.d.\ standard normal and is independent of $\{\eta_n\}$. Consider the following stochastic volatility model (see, e.g.,  \citet{beran:2013:long}, Section 2.1.3.8):
\begin{equation}\label{eq:stoch vol}
X_n=G(\eta_n)F(\xi_n),
\end{equation}
where $Z_{n,1}=\eta_n$ and $Z_{n,2}=\xi_n$,
and $G(\cdot)$ is a positive function and $F(\cdot)$ is chosen so that $\E F(\xi_n)=0$. The flexibility of choosing $G(\cdot)$ and $F(\cdot)$ allows a variety of marginal distributions for $\{X_n\}$. The sequence $\{X_n\}$ is uncorrelated, but the volatility $G(\eta_n)$ of $\{X_n\}$ may display long memory if $G(\cdot)$ is chosen appropriately.  See also  \citet{deo:hsieh:hurvich:2010:long} and \citet{jach:2012:subsampling} for variants of (\ref{eq:stoch vol}).

Note that in (\ref{eq:rho_k,b max}), the  canonical correlation $\rho_{k,b,1}$ for $\{\eta_n\}$  dominates since the canonical correlation   $\rho_{k,b,2}$ for $\{\xi_n\}$ vanishes when $k>b$ due to independence. Hence if $b_n=o(n)$, the  subsampling condition (\ref{eq:block cond gen}) holds for the underlying $\{\mathbf{Z}_n\}$ because it holds for $\{\eta_n\}$ by Theorem \ref{Thm:block cond}.
\end{example}

\begin{remark}\label{rem:other model}
In the special case of subsampling for the sample mean with a {\it deterministic} normalization,   where $\{X_n\}$ is a long-memory linear process which is not necessarily Gaussian, \citet{nordman:2005:validity}  showed that a block size condition $b_n=o(n^{1-\epsilon})$ for any $\epsilon>0$ suffices for consistency. In the same setup but with  $\{X_n\}$ replaced by a nonlinear function of a long-memory linear process, \citet{zhang:2013:block} obtained consistency with the same block size condition $b_n=o(n^{1-\epsilon})$ for arbitrarily small $\epsilon>0$. These do not fall within the Gaussian subordination framework (\ref{eq:subordination}). Establishing (\ref{eq:block condition alpha}) for long memory models beyond Gaussian subordination under a non-restrictive condition close to $b_n=o(n)$ may be an interesting open problem.
\end{remark}

\section{Subsampling of common statistics}\label{sec:other common stat}

In the following sections, we discuss subsampling of some  common types of statistics under long memory. The case of the sample mean is discussed in Examples \ref{eg:hall},  \ref{eg:bai taqqu zhang} and Remark \ref{rem:other model}.
 See also Chapter 4 of \citet{lahiri:2003:resampling} for  expositions about block bootstrap methods under weak dependence.
\subsection{Sample autocovariance}\label{sec:cov}
Let $\{Z_n\}$ be a stationary long-memory Gaussian process satisfying the assumptions in Theorem \ref{Thm:block cond}  and  $\gamma(k)=\Cov[Z_k,Z_0]\sim c  k^{2d-1}$ for some $c >0$  as $k\rightarrow\infty$. Let the memory parameter be $d\in (0,1/2)$.
 For simplicity, we focus on the  case where
 $$
 X_n=Z_n.
 $$ We consider  the estimation of $\gamma(m)$ by the sample autocovariance
\[
\widehat{\gamma}(m)=\frac{1}{n}\sum_{i=1}^{n-b} (X_i-\bar{X}_n)(X_{i+m}-\bar{X}_n),
\]
where $\bar{X}_n=n^{-1}(X_1+\ldots+X_n)$. We treat for simplicity here only the subsampling inference for the $\gamma(m)$ at a single lag $m$, while the joint inference at different lags can also be considered in the same way.   Even under the assumption of $\{X_n\}$ being Gaussian, the asymptotic behavior of $\widehat{\gamma}_n(m)$ is intricate.
Indeed, we have (see, e.g., \citet{dehling:taqqu:1991:bivariate} and \citet{hosking:1996:asymptotic}) as $n\rightarrow\infty$ that
\begin{equation}\label{eq:sample cov diff}
\tau_n\big( \widehat{\gamma}_n(m)-\gamma(m) \big)\ConvD\begin{cases}
 N(0,\sigma_1^2),& d\in (0,1/4), ~\tau_n= n^{1/2}; \\
\sigma_2 R_d,  & d\in (1/4,1/2),
~\tau_n=n^{1-2d},
\end{cases}
\end{equation}
where $\sigma_1$ and $\sigma_2$ are positive scale constants, and $R_d$ is a non-Gaussian distribution termed ``modified Rosenblatt distribution'' by \citet{hosking:1996:asymptotic}.
Using (\ref{eq:sample cov diff}) directly for asymptotic inference involves some difficulties: 1) the dichotomy of asymptotic  distributions as the memory parameter $d$ varies, 2)  $\tau_n$  takes a non-standard  rate if $d>1/4$ which is unknown. This forces one to deal with several nuisance parameters.

Now consider using subsampling for inference. We need to account for the non-standard scaling $\tau
_n$. Let $\widehat{\tau}_n=\widehat{\tau}_n(\mathbf{X}_1^n)$ be a random normalizer computed from the data so that
\begin{equation}\label{eq:cov conv normalize}
T_n:=\widehat{\tau}_n\big(\widehat{\gamma}_n(m)-\gamma(m)\big)\ConvD T
\end{equation}
as $n\rightarrow\infty$
for some non-degenerate random variable $T$.
 It is practically attractive to have just one   unified form of the normalizer  $\widehat{\tau}_n$ which works across  the different cases in (\ref{eq:sample cov diff}).  Such  $\widehat{\tau}_n$ can be in principle achieved by the ``self-normalization'' considered in \citet{lobato:2001:testing}, \citet{shao:2010:self}, although some functional limit theorems involved are yet to be established. See also \citet{McElroy:Jach:2012:Subsampling} for a different self-normalization.  We require that the  self-normalization $\widehat{\tau}_n$ captures the scaling $\tau_n$ in the sense of that as $n\rightarrow\infty$,
\begin{equation}\label{eq:self-norm ass}
\tau_n/\widehat{\tau}_n \ConvD W
\end{equation}
for some  random variable $W$ satisfying $P(W\neq 0)=1$. Now the conditions (\ref{eq:sample cov diff}), (\ref{eq:cov conv normalize}) and  (\ref{eq:self-norm ass}) are in line  with Assumption 11.3.1 of \citet{politis:1999:subsampling}.

Define now  analogously the sample autocovariance  computed on the block $\mathbf{X}_{k+1}^{k+b}$:
\[
\widehat{\gamma}_b(m;\mathbf{X}_{k+1}^{k+b})=\frac{1}{b}\sum_{i=k+1}^{k+b-m} (X_i-\bar{X}_{k+1}^{k+b})(X_{i+m}-\bar{X}_{k+1}^{k+b}),
\]
where
$$
\bar{X}_{k+1}^{k+b}=b^{-1}(X_{k+1}+\ldots+X_{k+b}),\ k=0,1,\ldots,n-b,\ 0\le m\le b.
$$
Set the statistics on the block as
\begin{equation}\label{eq:cov block stat}
T_b(\mathbf{X}_{k+1}^{k+b};\widehat{\gamma}(m))=\widehat{\tau}_b(\mathbf{X}_{k+1}^{k+b})\Big( \widehat{\gamma}_b(m;\mathbf{X}_{k+1}^{k+b})-\widehat{\gamma}_n(m) \Big),
\end{equation}
where $\widehat{\gamma}_b(m;\mathbf{X}_{k+1}^{k+b})$ satisfies (\ref{eq:self-norm ass}) and is based on the block $\mathbf{X}_{k+1}^{k+b}$.

\medskip
We formulate a result as follows, which is within the framework of Theorem \ref{Thm:subsample consistency}. The proof is similar to  that of Theorem 11.3.1 of \citet{politis:1999:subsampling}. See also the proof of Theorem 3.1 of \citet{bai:taqqu:zhang:2015:unified}.
\begin{theorem}\label{Thm:cov}
Let $\{X_n=Z_n\}$ be a long-memory Gaussian process satisfying the assumptions in Theorem \ref{Thm:block cond}. Assume that (\ref{eq:sample cov diff}), (\ref{eq:cov conv normalize}) and (\ref{eq:self-norm ass}) hold. Let \[\widehat{F}_{n,b}(x)=(n-b+1)^{-1}\sum_{i=1}^{n-b+1} \mathrm{I}\{ T_b\big(\mathbf{X}_{i}^{i+b-1};\widehat{\gamma}(m)\big)\le x\},\] where $T_b(\mathbf{X}_{i}^{i+b-1};\widehat{\gamma}_n(m))$ is as in (\ref{eq:cov block stat}). Let $F_{T_n}(x)$ be the distribution function of  $T_n$ in (\ref{eq:cov conv normalize}). Then  the consistency of the subsampling procedure holds: if $n\rightarrow\infty$, $b_n\rightarrow\infty$ and $b_n=o(n)$, we have
\[
|F_{T_n}(x)-\widehat{F}_{n,b_n}(x)|\ConvP 0
\]
for $x$ at the continuity point of the distribution of $T$ in (\ref{eq:cov conv normalize}).
\end{theorem}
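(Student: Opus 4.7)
The plan is to invoke Theorem \ref{Thm:subsample consistency} and check its three hypotheses. Assumption 1 is immediate: since $X_n=Z_n$, the model is the trivial Gaussian subordination with $G(z)=z$, and the standing hypothesis says $\{Z_n\}$ satisfies the conditions of Theorem \ref{Thm:block cond}. Assumption 2 is exactly the assumed convergence (\ref{eq:cov conv normalize}). All the work is concentrated in Assumption 3, which amounts to showing that replacing the unknown $\gamma(m)$ by the estimator $\widehat{\gamma}_n(m)$ in the block statistic is asymptotically negligible in the empirical distribution.

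To handle Assumption 3 I would write the block statistic with the true parameter,
\[
T_b^\ast(\mathbf{X}_i^{i+b-1};\gamma(m)) = \widehat{\tau}_b(\mathbf{X}_i^{i+b-1})\bigl(\widehat{\gamma}_b(m;\mathbf{X}_i^{i+b-1})-\gamma(m)\bigr),
\]
and observe the decomposition
\[
T_b(\mathbf{X}_i^{i+b-1};\widehat{\gamma}_n(m)) = T_b^\ast(\mathbf{X}_i^{i+b-1};\gamma(m)) - \Delta_{n,b,i},
\qquad \Delta_{n,b,i}:=\widehat{\tau}_b(\mathbf{X}_i^{i+b-1})\bigl(\widehat{\gamma}_n(m)-\gamma(m)\bigr).
\]
The shift $\Delta_{n,b,i}$ factors as $\bigl(\widehat{\tau}_b(\mathbf{X}_i^{i+b-1})/\tau_b\bigr)\cdot(\tau_b/\tau_n)\cdot\tau_n(\widehat{\gamma}_n(m)-\gamma(m))$. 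By (\ref{eq:sample cov diff}) the last factor is $O_P(1)$; by (\ref{eq:self-norm ass}) (applied to the block of length $b$) and stationarity, the first factor is tight since $\tau_b/\widehat{\tau}_b(\mathbf{X}_1^b)\ConvD W$ with $P(W\neq 0)=1$; and $\tau_b/\tau_n\to 0$ in both regimes of (\ref{eq:sample cov diff}) because $b_n=o(n)$. Thus $\Delta_{n,b,1}\ConvP 0$.

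To upgrade this pointwise statement to the uniform-in-$i$ control required for Assumption 3, I would use a Markov-type argument: for any $\epsilon>0$,
\[
\frac{1}{n-b+1}\sum_{i=1}^{n-b+1}\mathbb{E}\,\mathbf{1}\{|\Delta_{n,b,i}|\ge\epsilon\}=P(|\Delta_{n,b,1}|\ge\epsilon)\longrightarrow 0
\]
by stationarity, so the proportion of blocks with $|\Delta_{n,b,i}|\ge\epsilon$ is $o_P(1)$. Splitting the sum defining $\widehat{F}_{n,b_n}(x)$ according to whether $|\Delta_{n,b,i}|<\epsilon$ or not, and using $\mathbf{1}\{T_b^\ast\le x-\epsilon\}\le \mathbf{1}\{T_b^\ast-\Delta\le x\}\le \mathbf{1}\{T_b^\ast\le x+\epsilon\}$ on the first set while bounding the second set by its cardinality, yields the sandwich
\[
\widehat{F}_{n,b_n}^\ast(x-\epsilon)-\delta \;\le\; \widehat{F}_{n,b_n}(x) \;\le\; \widehat{F}_{n,b_n}^\ast(x+\epsilon)+\delta
\]
with probability tending to one, verifying Assumption 3. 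The main obstacle is this uniform shift control, but it reduces, via the self-normalization hypothesis (\ref{eq:self-norm ass}), to the elementary bound $\tau_b/\tau_n\to 0$; all other pieces are routine. Theorem \ref{Thm:subsample consistency} then yields the claimed consistency, and the argument parallels the proofs of Theorem 11.3.1 of \citet{politis:1999:subsampling} and Theorem 3.1 of \citet{bai:taqqu:zhang:2015:unified}.
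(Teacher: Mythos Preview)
Your approach is exactly the one the paper indicates: it places the result in the framework of Theorem~\ref{Thm:subsample consistency} and defers details to Theorem~11.3.1 of \citet{politis:1999:subsampling} and Theorem~3.1 of \citet{bai:taqqu:zhang:2015:unified}. Your verification of Assumptions~1 and~2, the decomposition $T_b=T_b^\ast-\Delta_{n,b,i}$, and the factorization of $\Delta_{n,b,i}$ are all on target.

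There is, however, a slip in the Markov step. You assert
\[
\frac{1}{n-b+1}\sum_{i=1}^{n-b+1}P(|\Delta_{n,b,i}|\ge\epsilon)=P(|\Delta_{n,b,1}|\ge\epsilon)
\]
``by stationarity'', but $\Delta_{n,b,i}=\widehat{\tau}_b(\mathbf{X}_i^{i+b-1})\bigl(\widehat{\gamma}_n(m)-\gamma(m)\bigr)$ involves $\widehat{\gamma}_n(m)$, which depends on the \emph{entire} sample $X_1,\dots,X_n$; the joint law of the two factors therefore does change with the position $i$ of the block inside the fixed sample, and the displayed equality need not hold. The repair is the one actually used in the cited references: separate the factors as $\Delta_{n,b,i}=R_i\cdot C_n$ with $R_i=\widehat{\tau}_b(\mathbf{X}_i^{i+b-1})/\tau_b$ and $C_n=\tau_b\bigl(\widehat{\gamma}_n(m)-\gamma(m)\bigr)$. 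Your own argument gives $C_n\ConvP 0$, while $R_i$ \emph{is} stationary in $i$ and is tight by (\ref{eq:self-norm ass}). On $\{|C_n|\le\epsilon/M\}$ one has $\{|\Delta_{n,b,i}|\ge\epsilon\}\subset\{|R_i|\ge M\}$, so
\[
\frac{1}{n-b+1}\sum_i \mathbf{1}\{|\Delta_{n,b,i}|\ge\epsilon\}\le \frac{1}{n-b+1}\sum_i \mathbf{1}\{|R_i|\ge M\},
\]
and the expectation of the right side equals $P(|R_1|\ge M)$ by the genuine stationarity of $R_i$; choosing $M$ large and then using $P(|C_n|>\epsilon/M)\to 0$ yields the $o_P(1)$ bound you need. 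With this adjustment your sandwich for Assumption~3 goes through, and the proof is complete.
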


Sample autocovariance falls within the category called ``smooth function of mean''  (see, e.g., Example 4.4.2 of \citet{politis:1999:subsampling} and Section 4.2 of \citet{lahiri:2003:resampling}).   Theorem \ref{Thm:cov} may be extended to this general category, given that asymptotic results analogous to (\ref{eq:sample cov diff}), (\ref{eq:cov conv normalize}) and  (\ref{eq:self-norm ass}) are established specifically.
\subsection{M-estimation}
\citet{beran:1991:m} considered the M-estimation for the following location model
\begin{equation}\label{eq:location model}
X_i=\mu+Q(Z_i),
\end{equation}
where $\{Z_i \}$ is a \emph{standardized} long-memory Gaussian process satisfying the assumptions in Theorem \ref{Thm:block cond}, and $\gamma(k)=\Cov[Z_k,Z_0]\sim c  k^{2d-1}$ for some $c >0$  as $k\rightarrow\infty$.  The  function $Q(\cdot)$ satisfies $\E Q(Z_i)=0$ and $\sigma^2:=\E Q(Z_i)^2<\infty$. Assume for simplicity that $\sigma^2=1$, while in general $\sigma^2$ can be consistently estimated by the sample variance and this does not affect the asymptotic results.
The estimating equation  is given by
\begin{equation}\label{eq:est eq}
\sum_{i=1}^n \psi(X_i-x)=0,
\end{equation}
where $\psi$ is some deterministic function such that $\E \psi (X_i-x)=0 $ if and only if $x=\mu$.
Let $\widehat{\mu}_n$ be the resulting M-estimator of $\mu$ (the solution to (\ref{eq:est eq})). Theorem 1 of \citet{beran:1991:m} states that
\begin{equation}\label{eq:m est normal}
n^{1/2-d}(\widehat{\mu}_n-\mu)\ConvD N(0,\sigma_M^2)
\end{equation}
for some scale constant $\sigma_M>0$. Theorem 1 of \citet{beran:1991:m} has five assumptions: the assumptions 1-4 are standard regularity conditions in the M-estimation context, while the 5th one is imposed to restrict to the Gaussian  asymptotics   in
(\ref{eq:m est normal}). If the 5th assumption  is dropped, depending on the Hermite rank $m$ (see (\ref{eq:Hermite Rank Def}))  of the composite transform $\psi\circ Q$, one may have
\begin{equation}\label{eq:m est nonnormal}
\tau_n(\widehat{\mu}_n-\mu)\ConvD  \begin{cases}
 N(0,\sigma_1^2),& (2d-1)m<-1,~ \tau_n= n^{1/2}; \\
\sigma_2 Z_{m,d},  & (2d-1)m>-1,  ~
\tau_n=n^{(d-1/2)m+1},
\end{cases}
\end{equation}
where $\sigma_1$ and $\sigma_2$ are positive scale constants, and $Z_{m,d}$ is the so-called \emph{Hermite distribution} (see \citet{dobrushin:major:1979:non}) which is non-Gaussian if the Hermite rank $m\ge 2$. Using directly (\ref{eq:m est nonnormal}) for inference is again difficult due to the dichotomy and the nuisance parameters.

However, as in Section \ref{sec:cov}, we can consider  a subsampling procedure with a proper self-normalization $\widehat{\tau}_n$ (see, e.g., \citet{shao:2010:self}). We omit the formal statement of the result, which is   similar to Theorem \ref{Thm:cov}. Note that  asymptotic results similar to (\ref{eq:sample cov diff}), (\ref{eq:cov conv normalize}) and  (\ref{eq:self-norm ass}) need to be established.

\subsection{Empirical process}
Let $\{Z_n\}$  be long-memory Gaussian with memory parameter $d\in (0,1/2)$ that satisfies the assumptions in Theorem \ref{Thm:block cond} as well as $\Cov[Z_k,Z_0]\sim c k^{2d-1}$ for some constant $c>0$. Consider the setup in \citet{dehling:taqqu:1989:empirical}:
let the data $\{X_n\}$ be given by the Gaussian subordination model (\ref{eq:subordination}),
where $G$ is any measurable function. We consider a case where the parameter is an infinite-dimensional object: the distribution function $F$. Assume that $F$ is continuous.  Consider the empirical distribution:
\begin{equation}\label{eq:example emp distr}
\widehat{F}_n(x)=\widehat{F}_n(x;\mathbf{X}_1^n)=\frac{1}{n}\sum_{i=1}^n \mathrm{I}\{X_i\le x\}
\end{equation}
as  an estimate of the distribution function $F(x)=P(X_n\le x)$. Let $m$ be the Hermite rank of the class of functions
$$
\mathcal{E}:= \Big\{\mathrm{I}\{G(\cdot)\le x\}-F(x), ~x\in \mathbb{R} \Big\}
 $$
 (see Definition before Theorem 1.1 of \citet{dehling:taqqu:1989:empirical}).
Theorem 1.1 of \citet{dehling:taqqu:1989:empirical} established the following functional limit theorem: if $(2d-1)m>-1$, then we have the weak convergence in the Skorohod space $D(-\infty,\infty)$:
\begin{equation}\label{eq:m est nonnormal emp}
\tau_n\big(\widehat{F}_n(x)-F(x)\big)\Rightarrow
  J_m(x)Z_{m,d}, \quad  \tau_n=n^{(d-1/2)m+1},
\end{equation}
where $Z_{m,d}$ is the Hermite distribution as in (\ref{eq:m est nonnormal}), and $J_m(x)$ is a non-random function determined by the class $\mathcal{E}$. The corresponding result for the short-memory regime $(2d-1)m<-1$ has not been established up to our knowledge (there may be technical issues with tightness, see \citet{chambers:slud:1989:central}), but a weak convergence to a Gaussian process with the rate $\tau_n=n^{1/2}$ is expected (see, e.g. \citet{dehling:philipp:2002:empirical}).

Now consider the block version of (\ref{eq:example emp distr}):
\begin{equation}\label{eq:example block emp distr}
\widehat{F}_{b,k}(x)=\widehat{F}_b(x;\mathbf{X}_{k}^{k+b-1})=\frac{1}{b}\sum_{i=k}^{k+b-1} \mathrm{I}\{X_i\le x\}.
\end{equation}
Again as in the previous sections, we can consider applying subsampling  with a proper estimation of scale $\widehat{\tau}_n$  so that
$$
\widehat{\tau}_n/\tau_n\ConvD W
 $$
 for some nonzero random variable $W$ and
 $$
 \widehat{\tau}_n\big(\widehat{F}_n(x)-F(x)\big)\Rightarrow T(x)
  $$
  for some non-degenerate random  function $T(x)$. Then one can use the empirical ``observations''
$$
\widehat{\tau}_b(\widehat{F}_{b,k}(x)-\widehat{F}_n(x)), \ \ k=1,\ldots,n-b+1
 $$
 for inference. For example, to construct a uniform confidence band for $F$,  consider
$$
S_k=\widehat{\tau}_b \sup_x|\widehat{F}_{b,k}(x)-\widehat{F}_n(x)|,\ k=1,\ldots,n-b+1.
 $$
 Then use the empirical quantile of $\{S_k\}$ to find the cutoff $s_\alpha$ for the confidence band
\begin{equation}\label{eq:confi band}
[\max\{\widehat{F}_n(x)- s_\alpha,0\}, ~\min\{\widehat{F}_n(x)+s_\alpha,1\}],\quad x\in \mathbb{R}.
\end{equation}
 For more information on subsampling empirical processes, see Section 7.4 of \citet{politis:1999:subsampling}.  One needs again to establish results similar to  (\ref{eq:sample cov diff}), (\ref{eq:cov conv normalize}) and  (\ref{eq:self-norm ass}).
 The asymptotic consistency of the confidence band (\ref{eq:confi band}) will then follow under the block size condition $b=b_n\rightarrow\infty$ and $b_n=o(n)$ as $n\rightarrow\infty$.

\section{Proofs of the main results}\label{sec:proofs}
We now give the proofs of the main results stated in Section \ref{sec:main results}. In all the proofs below, the letters $c$, $c_1,c_2,\ldots$ will denote positive constants whose values can change from line to line.
\subsection{Preliminary lemmas}\label{sec:proof lemma}
We give a number of lemmas which will be used in the proofs of the main results. Note that the covariance matrix of the joint vector $\left(\mathbf{Z}_{1}^b, \mathbf{Z}_{k+1}^{k+b}\right)$ is
\[
\begin{pmatrix}
\Sigma_b & \Sigma_{k,b}\\
\Sigma_{k,b}^T & \Sigma_b
\end{pmatrix},
\]
where  $\Sigma_b$ is the in-block covariance matrix  (\ref{eq:Sigma_m}), and $\Sigma_{k,b}$ is the cross-block covariance matrix   (\ref{eq:Sigma_k,b}).
The following lemma states a well-known  relation  between the maximization problem (\ref{eq:rho_k,b}) and the matrices $\Sigma_b$ and $\Sigma_{k,b}$.
\begin{lemma}\label{Lem:can corr matrix}
The supremum in (\ref{eq:rho_k,b}) is attained when $\mathbf{u}=\mathbf{u}^*$ and  $\mathbf{v}=\mathbf{v}^*$, where $\mathbf{u}^*$ is an eigenvector of the matrix
\begin{equation}\label{eq:U matrix}
U_{k,b}=\Sigma_b^{-1}\Sigma_{k,b}\Sigma_b^{-1}\Sigma_{k,b}^T
\end{equation}
 corresponding to its maximum eigenvalue $\lambda_U$, and where $\mathbf{v}^*$ is an eigenvector of the matrix
\begin{equation}\label{eq:V matrix}
V_{k,b}=\Sigma_b^{-1}\Sigma_{k,b}^T\Sigma_b^{-1}\Sigma_{k,b}
\end{equation}
corresponding to its maximum eigenvalue $\lambda_{V}$. In addition, the canonical correlation equals
\[
\rho_{k,b}=\sqrt{\lambda_U}=\sqrt{\lambda_V},
\]
and $\mathbf{u}^*$ and $\mathbf{v}^*$ are related through
\begin{equation}\label{eq:a* b* relation}
\mathbf{u}^*=\rho_{k,b}^{-1}\Sigma_b^{-1}\Sigma_{k,b}\mathbf{v}^*,\qquad \mathbf{v}^*=\rho_{k,b}^{-1}\Sigma_b^{-1}\Sigma_{k,b}^T\mathbf{u}^*.
\end{equation}
\end{lemma}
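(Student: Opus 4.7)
The plan is to reduce the constrained optimization defining $\rho_{k,b}$ in (\ref{eq:rho_k,b}) to a standard singular value problem via a linear change of variables, and then read off the claimed eigenvalue and eigenvector characterizations from elementary facts about the SVD. First I would substitute $\tilde{\mathbf{u}} = \Sigma_b^{1/2}\mathbf{u}$ and $\tilde{\mathbf{v}} = \Sigma_b^{1/2}\mathbf{v}$, which is well-defined and invertible since $\Sigma_b$ is assumed positive definite. After this substitution the Rayleigh-type quotient in (\ref{eq:rho_k,b}) becomes $\tilde{\mathbf{u}}^T A \tilde{\mathbf{v}} / (\|\tilde{\mathbf{u}}\|\,\|\tilde{\mathbf{v}}\|)$, where $A := \Sigma_b^{-1/2}\Sigma_{k,b}\Sigma_b^{-1/2}$, and the variational characterization of the largest singular value gives $\rho_{k,b} = \sigma_{\max}(A)$, attained at any pair of leading left and right singular vectors of $A$.

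Second, I would identify $\sigma_{\max}(A)^2$ with $\lambda_U$ and $\lambda_V$. A direct computation shows $AA^T = \Sigma_b^{-1/2}\Sigma_{k,b}\Sigma_b^{-1}\Sigma_{k,b}^T\Sigma_b^{-1/2} = \Sigma_b^{1/2}\, U_{k,b}\, \Sigma_b^{-1/2}$, so $AA^T$ is similar to $U_{k,b}$ and hence shares its spectrum; in particular $\lambda_{\max}(AA^T) = \lambda_U$. The analogous similarity holds between $A^TA$ and $V_{k,b}$, giving $\rho_{k,b} = \sqrt{\lambda_U} = \sqrt{\lambda_V}$.

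Third, to pin down the optimizers, I would use the standard fact that a top left singular vector $\tilde{\mathbf{u}}^*$ of $A$ is an eigenvector of $AA^T$ with eigenvalue $\lambda_U$, and that the matched right singular vector satisfies $\tilde{\mathbf{v}}^* = \rho_{k,b}^{-1} A^T \tilde{\mathbf{u}}^*$. Undoing the change of variables via $\mathbf{u}^* = \Sigma_b^{-1/2}\tilde{\mathbf{u}}^*$ and $\mathbf{v}^* = \Sigma_b^{-1/2}\tilde{\mathbf{v}}^*$, the similarity relation above translates the eigenequation for $\tilde{\mathbf{u}}^*$ into $U_{k,b}\mathbf{u}^* = \lambda_U \mathbf{u}^*$, and a one-line manipulation of $\tilde{\mathbf{v}}^* = \rho_{k,b}^{-1} A^T \tilde{\mathbf{u}}^*$ yields $\mathbf{v}^* = \rho_{k,b}^{-1}\Sigma_b^{-1}\Sigma_{k,b}^T \mathbf{u}^*$; the symmetric identity $\mathbf{u}^* = \rho_{k,b}^{-1}\Sigma_b^{-1}\Sigma_{k,b}\mathbf{v}^*$ follows by reversing the roles of $\mathbf{u}$ and $\mathbf{v}$.

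This is essentially textbook canonical correlation analysis, so I do not expect a serious obstacle; the work is just bookkeeping through the $\Sigma_b^{\pm 1/2}$ change of variables. The only mild point to flag is the degenerate case $\rho_{k,b}=0$ (equivalently $\Sigma_{k,b}=0$), in which the identities in (\ref{eq:a* b* relation}) are to be read vacuously, and any unit vector serves as an optimizer; all other cases are handled uniformly by the SVD argument above.
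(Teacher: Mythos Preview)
Your proposal is correct and is precisely the standard canonical correlation argument. The paper does not actually give a proof of this lemma but simply cites \citet{hotelling:1936:relations} and Section~21.5.3 of \citet{seber:2008:matrix}; your SVD-based derivation is exactly the textbook treatment behind those references, so there is nothing to add.
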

\begin{proof}
See \citet{hotelling:1936:relations} and also  Section 21.5.3 of \citet{seber:2008:matrix}.
\end{proof}
The following facts about the $\mathrm{FARIMA}(0,d,0)$ model can be found in \citet{brockwell:1991:time}, Section 13.2. See also \citet{hosking:1981:fractional}.
%\[
%\Gamma(x)=
%\begin{cases}
%\int_0^\infty t^{x-1}e^{-t}dt,\quad &x>0;\\
%+\infty,\quad & x=0;\\
%x^{-1}\Gamma(1+x),\quad &x<0.
%\end{cases}
%\]
The $\mathrm{FARIMA}(0,d,0)$  time series $\{Z_n\}$ with spectral density $f_d(\lambda)$ in (\ref{eq:f_d}) has covariance function
\begin{equation}\label{eq:gamma_d}
\gamma_d(n)=\gamma_d(0)  \prod_{k=1}^n\frac{k-1+d}{k-d} \sim~ c_d n^{2d-1} \text{ as }n\rightarrow\infty,\qquad  \gamma_d(0)  = \frac{\Gamma(1-2d)}{\Gamma(1-d)^2},
\end{equation}
where $c_d>0$ is a constant, and $\Gamma(\cdot)$ denotes the gamma function defined as
\[
\Gamma(x)=
\int_0^\infty t^{x-1}e^{-t}dt \text{ if }x>0;\quad
\Gamma(x)=+\infty, \text{ if } x=0;\quad
\Gamma(x) = x^{-1}\Gamma(1+x) \text{ if } x<0.
\]
Notice that $\gamma_d(n)>0$ and is decreasing as $n>0$ grows.
It is also known that the mean-square best linear predictor of $Z_{b+1}$
in terms of $Z_1,\ldots,Z_b$ is given as
\begin{equation}\label{eq:predictor 1 step}
\widehat{Z}_{b+1}=P_{[1,b]}Z_{b+1}=\sum_{j=1}^b \phi_{bj}Z_{b-j+1},
\end{equation}
 where $P_{[1,b]}$  denotes the $L^2(\Omega)$ projection onto the closed linear span  $\mathrm{sp}\{Z_1,\ldots,Z_b\}$, and the coefficients are given by
\begin{equation}\label{eq:phi pred}
\phi_{bj}=-\Gamma(-d)^{-1}  { b\choose j} \frac{\Gamma(j-d)\Gamma(b-d-j+1)}{\Gamma(b-d+1)}, \quad j=1,\ldots,b.
\end{equation}
Note also that each $\phi_{bj}>0$ since $\Gamma(x)>0$ if $x>0$ and $\Gamma(x)<0$ if $-1<x<0$. One can then easily deduce the following fact.
\begin{lemma}\label{Lem:positive phi}
Let $\{Z_n\}$ be the $\mathrm{FARIMA}(0,d,0)$ process with spectral  density $f_d$ given in (\ref{eq:f_d}).
Let $\phi_{bj}^n$, $j=1,\ldots,b$, $n\ge b+1$, be the coefficient\footnote{The superscript $n$ in $\phi_{bj}^n$ is an index.} of the best linear predictor of $Z_n$ in terms of $Z_{1},\ldots,Z_{b}$, namely,
$$
P_{[1,b]}Z_n=\sum_{j=1}^b \phi_{bj}^n Z_{b-j+1}.
$$
 Then
 $$
 \phi_{bj}^n>0
 $$
 for $j=1,\ldots,b$ and  $n\ge b+1$.
\end{lemma}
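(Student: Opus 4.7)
The plan is to proceed by induction on the prediction horizon $g := n-b \ge 1$, with $b\ge 1$ fixed. The base case $g=1$ (one-step prediction) is already settled: by formula (\ref{eq:phi pred}), together with the fact that $\Gamma(-d)<0$ for $0<d<1/2$ while $\Gamma(j-d)$, $\Gamma(b-d-j+1)$, $\Gamma(b-d+1)$ and $\binom{b}{j}$ are all positive for $1\le j\le b$, each coefficient $\phi_{bj}^{b+1}=\phi_{bj}$ is strictly positive.

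For the inductive step, suppose that for some $g\ge 1$ we have $\phi_{bj}^{b+h} > 0$ for all $1\le j\le b$ and all $1\le h\le g$. To obtain the coefficients at horizon $g+1$, I would use the tower property of orthogonal projections: since $\mathrm{sp}\{Z_1,\dots,Z_b\}\subset \mathrm{sp}\{Z_1,\dots,Z_{b+g}\}$, one has $P_{[1,b]}=P_{[1,b]}P_{[1,b+g]}$. Applying this to $Z_{b+g+1}$ and using the one-step expansion (\ref{eq:predictor 1 step}) at time $b+g+1$,
\[
P_{[1,b]}Z_{b+g+1}= P_{[1,b]}\sum_{j=1}^{b+g}\phi_{b+g,j}\,Z_{b+g-j+1}=\sum_{j=1}^{b+g}\phi_{b+g,j}\,P_{[1,b]}Z_{b+g-j+1}.
\]
I would then split the sum according to whether $b+g-j+1\le b$ (so that $P_{[1,b]}Z_{b+g-j+1}=Z_{b+g-j+1}$) or $b+g-j+1\ge b+1$ (so that $P_{[1,b]}Z_{b+g-j+1}$ is a predictor at horizon $\le g$, covered by the inductive hypothesis). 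After reindexing the ``within-block'' piece via $k=j-g$, both pieces can be regrouped as linear combinations of $Z_{b-k+1}$, $k=1,\dots,b$, giving
\[
\phi_{bk}^{b+g+1}= \phi_{b+g,k+g}+\sum_{j=1}^{g}\phi_{b+g,j}\,\phi_{bk}^{b+g-j+1}.
\]
Every factor on the right is strictly positive (the $\phi_{b+g,\cdot}$'s by the base case applied at level $b+g$, the $\phi_{bk}^{b+g-j+1}$'s by the inductive hypothesis), so $\phi_{bk}^{b+g+1}>0$ for all $1\le k\le b$.

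The computational content of the argument is light; the only care required is the bookkeeping to make sure the index shifts in the two sub-sums line up to a common expansion in $Z_{b-k+1}$ and that the inductive hypothesis is invoked only at horizons $1,\dots,g$, both of which are straightforward. I therefore expect no real obstacle: once the tower property of projections is invoked, the positivity propagates automatically because the recursion expresses the horizon-$(g+1)$ coefficients as a sum of products of positive quantities.
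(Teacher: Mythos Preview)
Your argument is correct and is essentially the same as the paper's: both rely on the tower property of projections together with the explicit positivity of the one-step FARIMA$(0,d,0)$ prediction coefficients in (\ref{eq:phi pred}). The only cosmetic difference is that the paper writes the full chain $P_{[1,b]}Z_n=P_{[1,b]}\cdots P_{[1,n-2]}P_{[1,n-1]}Z_n$ and peels off one index at a time (so only one-step coefficients ever appear), whereas you use the two-step tower $P_{[1,b]}=P_{[1,b]}P_{[1,b+g]}$ and an induction on the horizon, which requires invoking the multi-step coefficients at horizons $\le g$; the recursion you derive and its positivity are exactly what the paper's iterated application yields when unfolded.
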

\begin{proof}
Note that
\[
P_{[1,b]}Z_n=P_{[1,b]}\ldots P_{[1,n-2]}P_{[1,n-1]}Z_n.
\]
Then apply (\ref{eq:predictor 1 step}), (\ref{eq:phi pred}) and use the  positiveness of $\phi_{bj}$'s recursively.
\end{proof}

The next result plays a key role in the proof of Theorem \ref{Thm:special}.
\begin{lemma}\label{Lem:postive weights}
If  $\gamma(n)$ is the covariance function of a $\mathrm{FARIMA}(0,d,0)$ time series whose spectral density $f_d$ is given in (\ref{eq:f_d}),
then the matrices $U_{b,k}$ and $V_{b,k}$ in (\ref{eq:U matrix}) and (\ref{eq:V matrix}) respectively have all the entries positive, and the extremal eigenvectors $\mathbf{u}^*$ and $\mathbf{v}^*$ in Lemma \ref{Lem:can corr matrix} can be chosen\footnote{Eigenvectors are determined up to a multiplicative constant.} to have all positive entries.
\end{lemma}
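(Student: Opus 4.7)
The plan is to give each of the building blocks $\Sigma_b^{-1}\Sigma_{k,b}$ and $\Sigma_b^{-1}\Sigma_{k,b}^T$ a probabilistic interpretation as a matrix of prediction coefficients, then invoke Lemma \ref{Lem:positive phi}, and finish with Perron--Frobenius.

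First I would observe that for $j=1,\ldots,b$, the $j$-th column of $\Sigma_{k,b}$ is precisely $\mathrm{Cov}(\mathbf{Z}_1^b,Z_{k+j})$, so if $P_{[1,b]}Z_{k+j}=\sum_{i=1}^b c_{ij}Z_i$, the normal equations give $\Sigma_b(c_{1j},\ldots,c_{bj})^T=\mathrm{Cov}(\mathbf{Z}_1^b,Z_{k+j})$. Hence $(\Sigma_b^{-1}\Sigma_{k,b})_{ij}=c_{ij}$, which is a prediction coefficient of the type $\phi_{b,b-i+1}^{k+j}$ in Lemma \ref{Lem:positive phi} (after a trivial reindexing), and is therefore strictly positive.

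The second step is to handle $\Sigma_b^{-1}\Sigma_{k,b}^T=(\Sigma_{k,b}\Sigma_b^{-1})^T$. Here $\Sigma_{k,b}\Sigma_b^{-1}$ is the matrix of regression coefficients when predicting $\mathbf{Z}_1^b$ from $\mathbf{Z}_{k+1}^{k+b}$, i.e.\ \emph{backward} prediction. Because $f_d$ is an even function of $\lambda$, $\gamma_d$ is even and $\{Z_n\}$ is distributionally invariant under time reversal: defining $\tilde Z_n:=Z_{2b+k+1-n}$ yields a process with the same law. Under this reversal, $\mathbf{Z}_{k+1}^{k+b}$ becomes $\tilde{\mathbf{Z}}_1^b$ and each $Z_j$, $1\le j\le b$, becomes $\tilde Z_{2b+k+1-j}$ with index $\ge b+k+1>b$. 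So the coefficients in the backward prediction are forward prediction coefficients of $\tilde Z$, and Lemma \ref{Lem:positive phi} applies to show they are all strictly positive. Hence $\Sigma_{k,b}\Sigma_b^{-1}$, and therefore its transpose $\Sigma_b^{-1}\Sigma_{k,b}^T$, has all entries strictly positive.

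The third step is immediate: since $U_{k,b}=(\Sigma_b^{-1}\Sigma_{k,b})(\Sigma_b^{-1}\Sigma_{k,b}^T)$ and $V_{k,b}=(\Sigma_b^{-1}\Sigma_{k,b}^T)(\Sigma_b^{-1}\Sigma_{k,b})$ are products of two matrices with strictly positive entries, both $U_{k,b}$ and $V_{k,b}$ have strictly positive entries. For the eigenvector claim I would apply the Perron--Frobenius theorem: any square matrix with strictly positive entries has the property that its spectral radius is a simple eigenvalue with a (unique up to scaling) eigenvector having all strictly positive entries. Note that $U_{k,b}$ is similar to the symmetric positive semi-definite matrix $\Sigma_b^{-1/2}\Sigma_{k,b}\Sigma_b^{-1}\Sigma_{k,b}^T\Sigma_b^{-1/2}$, so its eigenvalues are real and non-negative and the maximum eigenvalue $\lambda_U=\rho_{k,b}^2$ coincides with the spectral radius; the same applies to $V_{k,b}$. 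Perron--Frobenius therefore supplies positive $\mathbf{u}^*$ and $\mathbf{v}^*$.

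The only delicate point is the second step: the identity $\Sigma_b^{-1}\Sigma_{k,b}=$ (forward prediction coefficients) does not by itself give the analogous positivity for the transposed matrix, and one must exploit either time reversibility or, equivalently, reapply the explicit FARIMA$(0,d,0)$ formulas in the reversed direction. Once this is in place the rest is mechanical.
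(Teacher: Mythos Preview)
Your argument is correct and follows essentially the same route as the paper: interpret the columns of $\Sigma_b^{-1}\Sigma_{k,b}$ and $\Sigma_b^{-1}\Sigma_{k,b}^T$ as FARIMA prediction coefficients (the paper handles the transposed matrix more directly by noting that, since $\gamma_d$ is even and $\Sigma_b$ is persymmetric, a column of $\Sigma_b^{-1}\Sigma_{k,b}^T$ is simply $(\phi_{b1}^n,\ldots,\phi_{bb}^n)^T$, which is your time-reversal observation in matrix form), then invoke Perron--Frobenius. The only cosmetic difference is that the paper applies Perron--Frobenius once to $U_{k,b}$ and then uses the relation $\mathbf{v}^*=\rho_{k,b}^{-1}\Sigma_b^{-1}\Sigma_{k,b}^T\mathbf{u}^*$ to deduce positivity of $\mathbf{v}^*$, whereas you apply Perron--Frobenius to both $U_{k,b}$ and $V_{k,b}$; your added remark that the maximum eigenvalue coincides with the spectral radius is a welcome clarification the paper omits.
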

\begin{proof}
To show that the matrices $U_{k,b}$ and $V_{k,b}$ have positive entries, it is enough to show that $\Sigma_b^{-1}\Sigma_{k,b}$ and $\Sigma_b^{-1}\Sigma_{k,b}^T$ have positive entries. Note that a column of $\Sigma_{k,b}$ is of the form
\[
\boldsymbol{\gamma}_{n-1}^{n-b}:=(\gamma(n-1),\ldots,\gamma(n-b))^T
\]
for some $n>b$. The corresponding column of $\Sigma_b^{-1}\Sigma_{k,b}$ is then $\Sigma_b^{-1}\boldsymbol{\gamma}_{n-1}^{n-b}=(\phi_{bb}^n\ldots,\phi_{b1}^n)^T$ by the Yule-Walker equation (see, e.g., (5.1.9) of \citet{brockwell:1991:time}). Similarly, a column of $\Sigma_b^{-1}\Sigma_{k,b}^T$ is of the form $(\phi_{b1}^n,\ldots,\phi_{bb}^n)^T$. Hence by Lemma \ref{Lem:positive phi}, all entries of $\Sigma_b^{-1}\Sigma_{k,b}$ and $\Sigma_b^{-1}\Sigma_{k,b}^T$ are positive.

The Perron-Frobenius Theorem (see Item 9.16 of \citet{seber:2008:matrix}) states that the eigenvector corresponding to the maximum eigenvalue of a matrix with positive  entries can be chosen to have all components positive. Since
the matrix $U_{k,b}$ has positive entries,  we deduce that the extremal eigenvector
$\mathbf{u}^*$ can be chosen to have all positive entries. In view of (\ref{eq:a* b* relation}), this also makes  $\mathbf{v}^*$ positive.
\end{proof}

The following simple fact will be useful.
\begin{lemma}\label{Lem:bound sum a}
Let $\{Z_n\}$ be a $\mathrm{FARIMA}(0,d,0)$ process with spectral density $f_d$ given in (\ref{eq:f_d}). Let $c>0$ be a fixed constant. Then for all nonnegative $u_j$'s  such that
\begin{equation}\label{eq:var constraint}
\Var\left[\sum_{j=1}^b u_j Z_j\right]\le c ,
\end{equation}
 there exists a constant $c_1>0$ which does not depend on $b$, such that
\begin{equation}\label{eq:sum a_m}
\sum_{j=1}^b u_j\le c_1 b^{1/2-d}.
\end{equation}
\end{lemma}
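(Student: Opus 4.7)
The plan is to exploit the crucial fact that for the FARIMA$(0,d,0)$ process with $d\in(0,1/2)$, the covariance function $\gamma_d(n)$ is strictly positive for every $n\ge 0$ and decreasing in $n\in\mathbb{Z}_+$, as recorded after equation (\ref{eq:gamma_d}). Combined with the nonnegativity hypothesis $u_j\ge 0$, this turns the variance expression into a quadratic form with \emph{positive} coefficients.

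First I would write out
\[
\Var\Bigl[\sum_{j=1}^b u_j Z_j\Bigr]=\sum_{i,j=1}^b u_i u_j\,\gamma_d(i-j).
\]
Since each term $u_i u_j\ge 0$ and $|i-j|\le b-1$, monotonicity of $\gamma_d$ on $\mathbb{Z}_+$ gives $\gamma_d(i-j)\ge \gamma_d(b-1)>0$ for every pair $(i,j)$. Factoring this uniform lower bound out yields
\[
\Var\Bigl[\sum_{j=1}^b u_j Z_j\Bigr]\;\ge\;\gamma_d(b-1)\Bigl(\sum_{j=1}^b u_j\Bigr)^{\!2}.
\]
Combining with the hypothesis (\ref{eq:var constraint}) gives $\bigl(\sum_j u_j\bigr)^2\le c/\gamma_d(b-1)$.

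It then remains to invoke the asymptotic $\gamma_d(n)\sim c_d n^{2d-1}$ from (\ref{eq:gamma_d}): for $b\ge 2$ this furnishes a constant $c'>0$, independent of $b$, such that $\gamma_d(b-1)\ge c' b^{2d-1}$, and the case $b=1$ is handled separately by $u_1\le \sqrt{c/\gamma_d(0)}$. Taking square roots produces $\sum_{j=1}^b u_j\le c_1 b^{1/2-d}$ with a constant $c_1$ depending only on $c$ and $d$, completing the argument.

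There is no substantial obstacle here; the only subtlety is recognizing that the positivity and monotonicity of the FARIMA$(0,d,0)$ covariance, together with $u_j\ge 0$, collapse the double sum to the simple lower bound $\gamma_d(b-1)\bigl(\sum_j u_j\bigr)^2$. This sidesteps any appeal to the spectral density or to eigenvalues of $\Sigma_b$, which would give a weaker bound depending on $\lambda_b$.
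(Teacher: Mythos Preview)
Your proposal is correct and follows essentially the same argument as the paper: expand the variance as the double sum $\sum_{i,j} u_i u_j \gamma_d(i-j)$, use nonnegativity of the $u_j$ together with the positivity and monotonicity of $\gamma_d$ to bound it below by $\gamma_d(b-1)\bigl(\sum_j u_j\bigr)^2$, and then apply the asymptotic $\gamma_d(n)\sim c_d n^{2d-1}$. The paper's proof is the same chain of inequalities written in one line; your version simply spells out the steps (and the $b=1$ case) more explicitly.
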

\begin{proof}
Using the fact that $u_j\ge 0$, as well as the positiveness,  monotonicity and the asymptotics of the covariance function $\gamma_d(n)$ in (\ref{eq:gamma_d}), we have
\[
c\ge \Var\left[\sum_{j=1}^b u_j Z_j\right]=\sum_{i,j=1}^b u_iu_j \gamma_d(i-j)\ge\gamma_d(b-1) \sum_{i,j=1}^b u_i u_j\ge c_2 b^{2d-1} \left(\sum_{j=1}^b u_j\right)^2
\]
for some constant $c_2>0$ not depending on $b$, which yields (\ref{eq:sum a_m}).
\end{proof}
\begin{remark}
By making use of the result of \citet{adenstedt:1974:large}, one can  strengthen (\ref{eq:sum a_m}) to  $\left|\sum_{j=1}^b u_j\right|\le c b^{1/2-d}$  with $u_j$'s not necessarily nonnegative. See Lemma 2 of \citet{betken:Wendler:2015:subsampling}.
\end{remark}
We  recall here the time and frequency domain  isomorphism (Kolmogorov Isomorphism).
\begin{lemma}[\citet{brockwell:1991:time}, Theorem 4.8.1]\label{Lem:iso}
Suppose $f$ is the spectral density of $\{Z_n\}$.
 Let $\mathcal{H}=\overline{\mathrm{sp}}\{Z_n, ~n\in \mathbb{Z}\}$
be the Hilbert space spanned by $\{Z_n\}$ in $L^2(\Omega)$. Let
 $\mathfrak{H}=\overline{\mathrm{sp}}\{e^{in\cdot},~ n\in \mathbb{Z}\}$ be the Hilbert space spanned by $\{e^{in\cdot}\}$ in $L^2((-\pi,\pi],\mathbb{C};f)$ ($f$-weighted  complex-valued $L^2$ space on $(-\pi,\pi]$). Then there is a unique Hilbert space  isomorphism
\[
T:  \mathcal{H} \longrightarrow   \mathfrak{H} ,\quad Z_n \longrightarrow e^{in\cdot}.
\]
\end{lemma}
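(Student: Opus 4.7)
The plan is to build $T$ on the dense subspace spanned by the $Z_n$'s and extend by continuity, the heart of the argument being the isometry identity that comes directly from the definition of spectral density.

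First I would define $T_0$ on finite complex linear combinations by
\[
T_0\Bigl(\sum_{j=1}^N c_j Z_{n_j}\Bigr) := \sum_{j=1}^N c_j e^{in_j\cdot}.
\]
The central computation is that $T_0$ is isometric: for $u=\sum_j c_j Z_{n_j}$ and $v=\sum_k d_k Z_{m_k}$ one has
\[
\langle u,v\rangle_{L^2(\Omega)} = \sum_{j,k} c_j \overline{d_k}\,\gamma(n_j-m_k) = \sum_{j,k} c_j \overline{d_k}\int_{-\pi}^{\pi} e^{i(n_j-m_k)\lambda}f(\lambda)\,d\lambda = \langle T_0 u, T_0 v\rangle_{\mathfrak{H}},
\]
where the middle equality is precisely the spectral representation of the covariance. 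From this identity two things follow at once: $T_0$ is well defined on equivalence classes (any linear relation $\sum_j c_j Z_{n_j}=0$ in $L^2(\Omega)$ maps to a vector of zero $\mathfrak{H}$-norm, hence to zero), and $T_0$ is a linear isometry from the pre-Hilbert span of $\{Z_n\}$ onto the pre-Hilbert span of $\{e^{in\cdot}\}$.

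Second, I would invoke the bounded linear transformation theorem to extend $T_0$ by continuity to a linear isometry $T:\mathcal{H}\rightarrow\mathfrak{H}$. The extension is automatically surjective, since its image is closed and contains the dense set $\{e^{in\cdot}\}$, and it is uniquely determined by its values on the generators by density, giving both existence and uniqueness of $T$. The main (and essentially only) technical point is recognising that the spectral-density formula $\gamma(n)=\int_{-\pi}^{\pi} e^{in\lambda}f(\lambda)\,d\lambda$ delivers exactly the inner product on $\mathfrak{H}$; once that is in hand, the rest is standard Hilbert-space machinery, which is why the statement is quoted from \citet{brockwell:1991:time} rather than reproved in detail.
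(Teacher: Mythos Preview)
Your argument is correct and is exactly the standard proof of the Kolmogorov isomorphism: define the map on finite linear combinations, verify isometry via the spectral representation $\gamma(n)=\int_{-\pi}^{\pi} e^{in\lambda}f(\lambda)\,d\lambda$, and extend by density. The paper itself does not supply a proof of this lemma at all---it simply cites \citet{brockwell:1991:time}, Theorem 4.8.1---so there is nothing to compare; your write-up is precisely the textbook argument being invoked.
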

We note that the translation in the time domain of $k$ units by the isomorphism acts as multiplication by $e^{ik\lambda}$  in the frequency domain.

Let us return to the maximization problem (\ref{eq:rho_k,b}). First note that
\[
\sup_{\mathbf{u}\in \mathbb{R}^b, \mathbf{v}\in \mathbb{R}^b} \Corr\Big(\langle\mathbf{u},\mathbf{Z}_1^b\rangle, \langle\mathbf{v}, \mathbf{Z}_{k+1}^{k+b}\rangle\Big)=\sup_{\mathbf{u}\in \mathbb{R}^b, \mathbf{v}\in \mathbb{R}^b} \Big|\Corr\Big(\langle\mathbf{u},\mathbf{Z}_1^b\rangle, \langle\mathbf{v}, \mathbf{Z}_{k+1}^{k+b}\rangle\Big)\Big|.
\]
Note also  that
the preceding maximization   can be stated in an equivalent constrained  optimization form:
\begin{align}\label{eq:const opt time}
&\rho_{k,b}=\sup_{\mathbf{u}\in \mathbb{R}^b, \mathbf{v}\in \mathbb{R}^b} \Big|\Cov\Big(\langle\mathbf{u},\mathbf{Z}_1^b\rangle, \langle\mathbf{v}, \mathbf{Z}_{k+1}^{k+b}\rangle\Big)\Big|=\sup_{\mathbf{u}\in \mathbb{R}^b, \mathbf{v}\in \mathbb{R}^b}\Big|\sum_{i,j=1}^b u_iv_j \gamma(k+j-i)\Big| ,\notag\\
&\text{Subject to: }   \Var\left[\langle\mathbf{u},\mathbf{Z}_1^b\rangle\right]=\sum_{i,j=1}^b u_iu_j\gamma(i-j)\le 1,\quad \Var\left[\langle\mathbf{v},\mathbf{Z}_{k+1}^{k+b}\rangle\right]=\sum_{i,j=1}^b v_iv_j\gamma(i-j)\le 1,
\end{align}
that is, the conditions
 $$
 \Var\left[\langle\mathbf{v},\mathbf{Z}_{1}^{b}\rangle\right]=1\quad \mbox{\rm and}\quad \Var\left[\langle\mathbf{v},\mathbf{Z}_{k+1}^{k+b}\rangle\right]=1$$
  can be replaced by
  $$
  \Var\left[\langle\mathbf{v},\mathbf{Z}_{1}^{b}\rangle\right]\le 1 \quad \mbox{\rm and}\quad  \Var\left[\langle\mathbf{v},\mathbf{Z}_{k+1}^{k+b}\rangle\right]\le 1.
  $$
This is because the maximum will be attained at the boundaries where the variances are equal to $1$ by scaling.
In view of Lemma \ref{Lem:iso}, the preceding constrained optimization can be expressed in the frequency-domain as
\begin{align}
&\rho_{k,b}=\sup_{U_b,V_b} \Big|\int_{-\pi}^\pi e^{ik\lambda} U_b(e^{i\lambda})\overline{V_b(e^{i\lambda})} f(\lambda) d\lambda\Big|=\sup_{U_b,V_b}\Big|\int_{-\pi}^\pi e^{-ik\lambda} U_b(e^{i\lambda})\overline{V_b(e^{i\lambda})} f(\lambda) d\lambda\Big|\label{eq:opt freq}\\
&\text{Subject to: }  \int_{-\pi}^\pi   |U_b(e^{i\lambda})|^2 f(\lambda)d\lambda \le 1,\quad  \int_{-\pi}^\pi   |V_b(e^{i\lambda})|^2 f(\lambda)d\lambda \le 1, \label{eq:const opt freq}
\end{align}
where the supremum is taken over all  polynomials $U_b(z)=\sum_{j=0}^{b-1} u_{j+1} z^j $, $V_b(z)=\sum_{j=0}^{b-1} v_{j+1} z^j$.
\begin{remark}\label{Rem:c replace 1}
If one replaces the constraints ``$\ldots\le 1$'' in (\ref{eq:const opt time}) or (\ref{eq:const opt freq}) by ``$\ldots \le c$'' for some constant $c>0$, then the supremum obtained in (\ref{eq:const opt time}) and (\ref{eq:const opt freq}) becomes $c\rho_{k,b}$.
\end{remark}

\subsection{Proof of the main theorems}\label{sec:pf thm}
\begin{proof}[Proof of Theorem \ref{Thm:special}]
We first use  the maximizer  $\mathbf{u}^*=(u_1^*,\ldots,u_b^*)^T$ and $\mathbf{v}^*=(v_1^*,\ldots,v_b^*)^T$ of (\ref{eq:const opt time}) to get
\[
\rho_{k,b}=\sum_{i,j=1}^b u_i^* v_j^*\gamma_d(k+j-i).
\]
By Lemma \ref{Lem:postive weights}, the components   $u_j^*$ and $v_j^*$, $j=1,\ldots,b$, can be chosen to be all positive. In view of (\ref{eq:const opt time}), we can suppose (\ref{eq:var constraint}) with $c=1$. By the positiveness and the monotone decreasing property of $\gamma_d(n)$ in (\ref{eq:gamma_d}), as well as the relation (\ref{eq:sum a_m}) in Lemma \ref{Lem:bound sum a}, we have for $k>b$,
\begin{align*}
\rho_{k,b}\le \left(\sum_{j=1}^b u_j^*\right)\left(\sum_{j=1}^b v_j^* \right) \gamma_d(k+1-b)
 &\le c  b^{1/2-d}b^{1/2-d} (k-b)^{2d-1}=c b^{1-2d}(k-b)^{2d-1}.
\end{align*}
\end{proof}

The following corollary will be used in the proof of Theorem \ref{Thm:main}. It is an immediate consequence of  Theorem \ref{Thm:special}, the  frequency-domain characterization (\ref{eq:const opt freq}) and
Remark \ref{Rem:c replace 1}.
\begin{corollary}\label{Cor:g_b}
Define $g_b(\lambda)=U_b(e^{i\lambda}) \overline{V_b(e^{i\lambda})}f_d(\lambda)$, where $f_d$ is as in (\ref{eq:f_d}), and let
\[
\widehat{g}_b(k)=\int_{-\pi}^{\pi} e^{ik\lambda} g_b(\lambda)d\lambda, \qquad  k\in \mathbb{Z},
\]
be its Fourier coefficient. Then under the constraints
 $$\int_{-\pi}^\pi   |U_b(e^{i\lambda})|^2 f_d(\lambda)d\lambda \le c_1\ \  \mbox{\it and}\ \  \int_{-\pi}^\pi   |V_b(e^{i\lambda})|^2 f_d(\lambda)d\lambda \le c_1,
  $$
  where $c_1>0$ is a constant, we have for some constant $c>0$ and $1\le b<k$ that
\[
\sup_{U_b,V_b} \left|\widehat{g}_b(k)\right|=c_1\rho_{k,b} \le c b^{1-2d}(k-b)^{2d-1},
\]
where $\rho_{k,b}$ is the canonical correlation corresponding to the spectral density $f_d(\lambda)$.
\end{corollary}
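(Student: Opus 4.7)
The plan is to observe that this corollary is essentially a reformulation of Theorem \ref{Thm:special} in frequency-domain language, so the argument should be a short bookkeeping exercise rather than a new analytic estimate.

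First I would identify $\widehat{g}_b(k)$ with the quantity inside the supremum in the frequency-domain representation (\ref{eq:opt freq}). Indeed, by construction
\[
\widehat{g}_b(k)=\int_{-\pi}^{\pi} e^{ik\lambda}\, U_b(e^{i\lambda})\,\overline{V_b(e^{i\lambda})}\, f_d(\lambda)\, d\lambda,
\]
which is exactly the first integral appearing in (\ref{eq:opt freq}) with $f=f_d$. Under the normalized constraints (\ref{eq:const opt freq}), taking the supremum of $|\widehat{g}_b(k)|$ over all pairs $(U_b,V_b)$ of degree-$(b-1)$ polynomials therefore reproduces the canonical correlation $\rho_{k,b}$ associated to the FARIMA$(0,d,0)$ spectral density $f_d$.

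Next I would invoke Remark \ref{Rem:c replace 1}, which says that relaxing the variance constraints from $\le 1$ to $\le c_1$ simply rescales the supremum by $c_1$ (by homogeneity: $(U_b,V_b)\mapsto(c_1^{1/2}U_b,c_1^{1/2}V_b)$ scales the constraint integrals by $c_1$ and the target integral also by $c_1$). Hence, under the hypothesis of the corollary,
\[
\sup_{U_b,V_b}\left|\widehat{g}_b(k)\right|=c_1\,\rho_{k,b}.
\]

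Finally, I would apply Theorem \ref{Thm:special} to bound $\rho_{k,b}\le c\,\bigl(b/(k-b)\bigr)^{1-2d}=c\,b^{1-2d}(k-b)^{2d-1}$ for $1\le b<k$, completing the chain of inequalities. There is no real obstacle here; the only thing to verify carefully is the equivalence between $\widehat{g}_b(k)$ and the supremand in (\ref{eq:opt freq})—in particular that the phase factor $e^{ik\lambda}$ (rather than $e^{-ik\lambda}$) produces the same supremum, which is already noted in (\ref{eq:opt freq}) and follows from swapping the roles of $U_b$ and $V_b$ (or conjugating) inside the absolute value.
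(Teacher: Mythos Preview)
Your proposal is correct and matches the paper's own treatment: the paper states that the corollary ``is an immediate consequence of Theorem~\ref{Thm:special}, the frequency-domain characterization (\ref{eq:const opt freq}) and Remark~\ref{Rem:c replace 1},'' which is precisely the three-step bookkeeping argument you outline.
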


We are now ready to prove Theorem \ref{Thm:main}.
\begin{proof}[Proof of Theorem \ref{Thm:main}]
Because $f=f_df_0$ and $f_0\ge c_0>0$ by assumption, the constraint in (\ref{eq:const opt freq}) implies the following constraint:
\begin{equation}\label{eq:new const}
\int_{-\pi}^\pi   |U_b(e^{i\lambda})|^2 f_d(\lambda)d\lambda \le c_0^{-1}\ \ \mbox{\it and} \ \
\int_{-\pi}^\pi   |V_b(e^{i\lambda})|^2 f_d(\lambda)d\lambda \le c_0^{-1}.
\end{equation}
Using the notation in (\ref{eq:const opt freq}), set as in Corollary \ref{Cor:g_b}
\begin{equation}\label{eq:g_b}
g_b(\lambda)=U_b(e^{i\lambda})\overline{V_b(e^{i\lambda})} f_d(\lambda),
\end{equation}
which involves the spectral density $f_d$.
The first integral in (\ref{eq:opt freq}) becomes
\begin{align*}
\int_{-\pi}^{\pi} e^{ik\lambda} g_b(\lambda)f_0(\lambda)d\lambda.
\end{align*}
We have (see Corollary 4.3.2 of \citet{brockwell:1991:time})
\[
f_0(\lambda)= \frac{1}{2\pi}\sum_{n }e^{-in\lambda} \gamma_0(n)
,\]
and
$f_0(\lambda)\le(2\pi)^{-1}\sum_n|\gamma_0(n)|<\infty$. Note also that $\int_{-\pi}^\pi |g_b(\lambda)| d\lambda <\infty $ since $U_b$ and $V_b$ are polynomials which are bounded on $(-\pi,\pi]$. So one gets by Fubini's theorem that
\begin{align}
\int_{-\pi}^{\pi} e^{ik\lambda} g_b(\lambda)f_0(\lambda)d\lambda=&\frac{1}{2\pi}\int_{-\pi}^{\pi} e^{ik\lambda} g_b(\lambda)\left(\sum_{n}\gamma_0(n)e^{-in\lambda}\right) d\lambda\notag\\=&\sum_{n} \widehat{g}_b(k-n)\gamma_0(n)=\sum_{n} \widehat{g}_b(n)\gamma_0(k-n).\label{eq:convolution}
\end{align}

In view of (\ref{eq:opt freq}) and (\ref{eq:convolution}), one has
\begin{equation}\label{eq:to be split}
\rho_{k,b}=\sup_{U_b,V_b} \left|\int_{-\pi}^{\pi} e^{ik\lambda} g_b(\lambda)f_0(\lambda)d\lambda\right|=\sup_{U_b,V_b} \left|\sum_{n} \widehat{g}_b(n)\gamma_0(k-n)\right|,
\end{equation}
where $\rho_{k,b}$ is the canonical correlation corresponding to the spectral density $f=f_df_0$. We will now split the last expression in (\ref{eq:to be split}) into two terms, one involving $|n|>k'$ and other  $|n|\le k'$, where $b<k'\le k(1-\epsilon)$ as in (\ref{eq:m k' restrict}).
Hence
\begin{equation}\label{eq:rho_k,b bound in pf}
\rho_{k,b}\le    \sup_{U_b,V_b}\sum_{|n|> k'} |\widehat{g}_b(n)\gamma_0(k-n)|+\sup_{U_b,V_b} \sum_{|n|\le k'} |\widehat{g}_b(n)\gamma_0(k-n)|=:T_1+T_2.
\end{equation}

The first term can be bounded as
\begin{align*}
T_1\le  \left(\sum_{s=-\infty}^\infty|\gamma_0(s)|\right) \max_{|n|>k'} \sup_{U_b,V_b}|\widehat{g}_b(n)|,
\end{align*}
where
\begin{equation*}
\sup_{U_b,V_b}|\widehat{g}_b(n)|=\sup_{U_b,V_b}\left |\int_{-\pi}^\pi e^{in\lambda} U_b(e^{i\lambda})\overline{V_b(e^{i\lambda})} f_d(\lambda) d\lambda \right|=\sup_{U_b,V_b}\left |\int_{-\pi}^\pi e^{-in\lambda} U_b(e^{i\lambda})\overline{V_b(e^{i\lambda})} f_d(\lambda) d\lambda \right|.
\end{equation*}
In view of (\ref{eq:new const}), Corollary \ref{Cor:g_b} with $c_1=c_0^{-1}$ yields that
\[
\sup_{U_b,V_b}|\widehat{g}_b(n)|\le c (n-b)^{2d-1}b^{1-2d}.
\]
Hence
\begin{equation}\label{eq:T_1}
T_1\le   \left(\sum_{s=-\infty}^\infty|\gamma_0(s)|\right) c \max_{n>k'} (n-b)^{2d-1}b^{1-2d}\le c_1 (k'-b)^{2d-1}b^{1-2d}.
\end{equation}

We now deal with $T_2$. First note that by (\ref{eq:g_b}),  the Cauchy-Schwartz inequality and (\ref{eq:new const}),
one has
\begin{equation}\label{eq:bound g_b hat}
|\widehat{g}_b(n)|\le \int_{-\pi}^\pi  |U_b(e^{i\lambda})\overline{V_b(e^{i\lambda})}| f_d(\lambda)d\lambda   \le c_0^{-1}.
\end{equation}
$\bullet$ If the assumption $\gamma_0(n)=O(n^{-\alpha})$ holds, then (\ref{eq:bound g_b hat}) and  the restriction $k'\le k(1-\epsilon)$ imply that
\begin{equation}\label{eq:T_2}
T_2\le  c_0^{-1}\sum_{|n|\le k'} |\gamma_0(k-n)|\le c\sum_{|n|\le k'} (k-n)^{-\alpha}\le c\sum_{|n|\le k'}  (k-k')^{-\alpha} \le c_1 k'  (k-k')^{-\alpha}\le c_1 \epsilon^{-\alpha} k'k^{-\alpha}.
\end{equation}
$\bullet$ If instead  $\gamma_0(n)=o(n^{-\alpha})$ is assumed, then
\begin{equation}\label{eq:T_2 o}
T_2\le  c_0^{-1}\sum_{|n|\le k'} |\gamma_0(k-n)|\le c k'  o\left((k-k')^{-\alpha}\right)\le c \epsilon^{-\alpha} k'o(k^{-\alpha}).
\end{equation}
$\bullet$ If alternatively  $\gamma_n(n)=O(e^{-cn})$ is assumed,  then
\begin{equation}\label{eq:T_2 alt}
T_2\le  c_0^{-1}\sum_{n=-\infty}^{k'} |\gamma_0(k-n)|\le c_1   e^{-c(k-k')}\le  c_1 e^{-\epsilon c k}.
\end{equation}
Combining (\ref{eq:rho_k,b bound in pf}), (\ref{eq:T_1}), (\ref{eq:T_2}), (\ref{eq:T_2 o}) and (\ref{eq:T_2 alt}) yields the desired bounds.
\end{proof}

\begin{lemma}\label{Lem:reduction max}
\footnote{This argument and the one in the following theorem were suggested by an anonymous referee.}
Suppose that for any  $\epsilon\in (0,1)$,  as $n\rightarrow\infty$ and $b_n=o(n)$, we have
\begin{equation}\label{eq:max alpha tend zero}
\max_{[\epsilon n]\le k\le n}\rho_{k,b_n}\rightarrow 0.
\end{equation}
Then $\sum_{k=1}^n\rho_{k,b_n}=o(n)$ as $n\rightarrow\infty$ . Furthermore, if $\rho_{k,b}$ is non-increasing in $k$ for each $b$, then the converse holds.
\end{lemma}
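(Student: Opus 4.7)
The plan is to prove the forward implication by a direct splitting argument and the converse by using monotonicity to bound the maximum pointwise by an average.

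For the forward direction, I would fix an arbitrary $\epsilon \in (0,1)$ and split
\[
\sum_{k=1}^n \rho_{k,b_n} = \sum_{k=1}^{[\epsilon n]-1} \rho_{k,b_n} + \sum_{k=[\epsilon n]}^{n} \rho_{k,b_n}.
\]
For the first sum I would apply the trivial bound $\rho_{k,b_n}\le 1$ from (\ref{eq:rho_k,b<=1}), which contributes at most $\epsilon n$. For the second sum I would use the hypothesis (\ref{eq:max alpha tend zero}) to obtain
\[
\sum_{k=[\epsilon n]}^{n} \rho_{k,b_n} \le n \max_{[\epsilon n]\le k\le n}\rho_{k,b_n} = o(n).
\]
Dividing by $n$ and taking $\limsup_{n\to\infty}$ gives $\limsup_{n\to\infty} n^{-1}\sum_{k=1}^n \rho_{k,b_n}\le \epsilon$, and since $\epsilon$ was arbitrary the sum is $o(n)$.

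For the converse, assume $\rho_{k,b}$ is non-increasing in $k$ for each $b$ and that $\sum_{k=1}^n \rho_{k,b_n}=o(n)$. Monotonicity immediately gives
\[
\max_{[\epsilon n]\le k\le n}\rho_{k,b_n} = \rho_{[\epsilon n],b_n},
\]
and also that $\rho_{k,b_n}\ge \rho_{[\epsilon n],b_n}$ for every $1\le k\le [\epsilon n]$. Therefore
\[
[\epsilon n]\,\rho_{[\epsilon n],b_n} \le \sum_{k=1}^{[\epsilon n]} \rho_{k,b_n} \le \sum_{k=1}^{n} \rho_{k,b_n} = o(n),
\]
which yields $\rho_{[\epsilon n],b_n} \le \epsilon^{-1}o(1)\to 0$, as required.

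There is no substantial obstacle: the argument is a one-line splitting on each side, relying only on the universal bound $\rho_{k,b}\le 1$ and (in the converse) on the stated monotonicity to replace a maximum by an average. The only minor point to be careful about is that $\epsilon$ is fixed before letting $n\to\infty$, so that $[\epsilon n]\to\infty$ and the bounds $o(n)/[\epsilon n]=O(\epsilon^{-1})o(1)$ are legitimate.
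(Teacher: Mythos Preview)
Your proof is correct and follows essentially the same approach as the paper's: the forward direction splits the sum at $[\epsilon n]$, bounds the short part by $\epsilon n$ via $\rho_{k,b}\le 1$ and the long part by $n$ times the maximum, then lets $\epsilon\downarrow 0$; the converse uses monotonicity to write the maximum as $\rho_{[\epsilon n],b_n}$ and bounds it by the partial average. The arguments are effectively identical.
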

\begin{proof}
For an arbitrarily small $\epsilon>0$. Indeed, if (\ref{eq:max alpha tend zero}) holds, then using $\rho_{k,b}\le 1$, one has
\[
\sum_{k=1}^n \rho_{k,b_n}=\sum_{k<[\epsilon n]} \rho_{k,b_n}
+\sum_{[\epsilon n]\le k\le n}\rho_{k,b_n}\le \epsilon n+ \sum_{[\epsilon n]\le k\le n}\max_{[\epsilon n]\le k\le n}\rho_{k,b_n}.
\]
By (\ref{eq:max alpha tend zero}),
\[
\limsup_{n\rightarrow\infty} \frac{1}{n}\sum_{k=1}^n \rho_{k,b_n} \le \epsilon.
\]
By the arbitrariness of $\epsilon$, we get $\sum_{k=1}^n \rho_{k,b_n}=o(n)$.

Now suppose that $\rho_{k,b}$ is non-increasing in $k$. Assume $\frac{1}{n}\sum_{k=1}^n \rho_{k,b_n}\rightarrow0$ as $n\rightarrow\infty$. Then for any $\epsilon\in (0,1)$, we have
\[
\frac{1}{n}\sum_{k=1}^n \rho_{k,b_n}\ge \frac{1}{n}\sum_{k=1}^{[\epsilon n]}\rho_{k,b_n}\ge  \frac{1}{n}\sum_{k=1}^{[\epsilon n]}\rho_{[\epsilon n],b_n}=\frac{[n\epsilon]}{n} \rho_{[\epsilon n],b_n}.
\]
Taking $n\rightarrow\infty$ on both sides, we get  $\rho_{[\epsilon n],b_n}=\max_{[\epsilon n]\le k\le n}\rho_{k,b_n}\rightarrow 0$.
\end{proof}

\begin{proof}[Proof of Theorem \ref{Thm:block cond}]

By Lemma \ref{Lem:reduction max},  we need to prove (\ref{eq:max alpha tend zero}). First fix an integer $m\ge 1$ and choose
\begin{equation}\label{eq:k' choice}
 k'=b_n(m+1).
\end{equation}Even though $m$ can be quite large, $k'=b_n(m+1)=o(n)$ because
 $b_n=o(n)$. In fact, for $n$ large enough, we have the following chain of inequalities:

 $$
 1\le b_n< k'=b_n(m+1)\le k/2 \quad \textrm{for} \quad
  [\epsilon n]\le k \le n.
 $$
 The inequalities $1\le b_n< k'\le k/2$
 allow the application of
  Theorem \ref{Thm:main} (a), yielding
\begin{equation}\label{eq:rho_k,b bound special}
\rho_{k,b_n}\le c_1 \left(\frac{b_n}{k'-b_n}\right)^{1-2d} + c_2 \frac{k'}{k}.
\end{equation}
Then by $k'=b_n(m+1)$, (\ref{eq:rho_k,b bound special}) and $b_n=o(n)$,
\[
\limsup_{n\rightarrow\infty} \max_{[n\epsilon]\le k\le n}\rho_{k,b_n}\le  c_1 m^{2d-1}+ c_2(m+1)\limsup_{n\rightarrow\infty} b_n [\epsilon n]^{-1}=c_1m^{2d-1}.
\]
Since $m$ can be chosen arbitrarily large and $2d-1<0$, we get (\ref{eq:max alpha tend zero}).

\end{proof}
\appendix
\section*{Appendix}
To obtain Proposition \ref{Pro:bw} and get a bound on $\rho_{k,b}$  in (\ref{eq:rho_k,b}),
\citet{betken:Wendler:2015:subsampling} imposed  the following assumptions on the  covariance function  $\gamma(n)$ and the spectral density  $f(\lambda)$ of $\{Z_n\}$.
\begin{enumerate}[\textit{BW}1.]
\item  The covariance function satisfies $\gamma(n)=n^{2d-1}L_\gamma(n)$, $d\in (0,1/2)$, where $L_\gamma(n)$ is a function slowly varying as $n\rightarrow\infty$, which satisfies $\max_{n+1\le k\le n+2m-1}|L_\gamma(k)-L_\gamma(n)|\le C (m/n) \min\{L_\gamma(n),1\}$ for some constant $C>0$ and all $n,m\in \mathbb{Z}_+$.
\item  The spectral density $f(\lambda)=|\lambda|^{-2d}L_f(\lambda)$, where $L_f(\lambda)$ is a slowly varying function as $\lambda\rightarrow 0+$ and satisfies  $L_f(\lambda)\ge c$ for some $c>0$ and the limit of $L_f(\lambda)$ exists (can be $+\infty$) as $\lambda\rightarrow 0$\footnote{In \citet{betken:Wendler:2015:subsampling}, the existence of the limit $L_f(\lambda)$ as $\lambda\rightarrow 0$ is  not assumed. However, this seems necessary,  since the result in \citet{adenstedt:1974:large}  applied by \citet{betken:Wendler:2015:subsampling} requires  continuity at $\lambda=0$ of $1/L(\lambda)$ (see the proof of Lemma 2 in \citet{betken:Wendler:2015:subsampling}, which quoted Lemma 4.4 of \citet{adenstedt:1974:large}).}.
\end{enumerate}

\medskip
\noindent\textbf{Acknowledgments.} We would like to thank Mamikon S.\ Ginovyan and Ting Zhang for useful discussions. We also thank the referees for their helpful comments    which lead to significant improvement of the paper. This work was partially supported by the NSF grant  DMS-1309009  at Boston University.

\bibliographystyle{plainnat}
\bibliography{Bib}

\end{document}